\documentclass[11pt]{article}

\usepackage{amssymb, amsmath, amsthm, graphicx}
\usepackage[left=1in,top=1in,right=1in]{geometry}

\usepackage{subfigure}

   \theoremstyle{plain}
      \newtheorem{theorem}{Theorem}[section]
      \newtheorem{lemma}[theorem]{Lemma}
            \newtheorem{observation}[theorem]{Observation}
      \newtheorem{corollary}[theorem]{Corollary}
      
\newtheorem{conjecture}[theorem]{Conjecture}
      \theoremstyle{definition}
      
      \theoremstyle{remark}

	\def\ocn{\mbox{\rm odd-cr}}

\author{Radoslav Fulek\thanks{The authors gratefully acknowledge support from the Swiss National
     Science Foundation Grant No.  200021-125287/1}
\thanks{EPFL, Lausanne.
Email: {\tt radoslav.fulek@epfl.ch}} \and Andrew Suk$^*$\thanks{Courant Institute, New York and EPFL, Lausanne. Email: {\tt suk@cims.nyu.edu}}
}

\title{On disjoint crossing families in geometric graphs}

\begin{document}

\maketitle

\begin{abstract}
A \emph{geometric graph} is a graph drawn in the plane with vertices
represented by points and edges as straight-line segments.  A geometric graph contains a \emph{$(k,l)$-crossing family}
 if there is a pair of edge subsets $E_1,E_2$ such that $|E_1| = k$ and $|E_2| = l$,
 the edges in $E_1$ are pairwise crossing, the edges in $E_2$ are pairwise crossing,
 and every edges in $E_1$ is disjoint to every edge in $E_2$.  We conjecture that for any
 fixed $k,l$, every $n$-vertex geometric graph with no $(k,l)$-crossing family has at most
$c_{k,l}n$ edges, where $c_{k,l}$ is a constant that depends only on $k$ and $l$.
 In this note, we show that every $n$-vertex geometric graph with no $(k,k)$-crossing family has at most $c_kn\log n$ edges,
 where $c_k$ is a constant that depends only on $k$, by proving a more general result which relates
 extremal function of a geometric graph $F$ with extremal function of two completely disjoint copies of $F$.
We also settle the conjecture for geometric graphs with no $(2,1)$-crossing family.  As a direct application, this implies that for any circle graph $F$ on 3 vertices, every $n$-vertex geometric graph that does not contain a matching whose intersection graph is $F$ has at most $O(n)$ edges.

\end{abstract}

\section{Introduction}

A \emph{topological graph} is a graph drawn in the plane with points
as vertices and edges as non-self-intersecting arcs connecting its vertices. The arcs are
allowed to intersect, but they may not pass through vertices except for
their endpoints.  Furthermore, the edges are not allowed to have tangencies, i.e., if two edges share an interior point, then they must properly cross at that point.
We only consider graphs without parallel edges or self-loops.
A topological graph is \emph{simple} if every pair of its edges intersect at most once.
If the edges are drawn as straight-line segments, then the graph is \emph{geometric}.  Two edges of a topological graph \emph{cross} if their interiors share a point, and are \emph{disjoint} if they do not have a point in common (including their endpoints).

It follows from Euler's Polyhedral Formula that every simple topological graph on $n$ vertices and no crossing edges has at most $3n -6$ edges.  It is also known that every simple topological graph on $n$ vertices
 with no pair of disjoint edges has at most $O(n)$ edges \cite{lov},\cite{fulek}.  Finding the maximum number of edges in a topological (and geometric) graph with a forbidden substructure has been a classic problem in extremal topological graph theory (see \cite{ack}, \cite{ag}, \cite{pach}, \cite{foxpach}, \cite{toth}, \cite{rados}, \cite{tardos}, \cite{pacht}, \cite{tothvaltr}).  Many of these problems ask for the maximum number of edges in a topological (or geometric) graph whose edge set does not contain 
a matching that defines a particular intersection graph.  Recall that the {\it intersection graph} of objects $\mathcal{C}$ in the plane is a graph with vertex set $\mathcal{C}$, and two vertices are adjacent if their corresponding objects intersect.  Much research has been devoted to understanding the clique and independence number of intersection graphs due to their applications in VLSI design \cite{vlsi}, map labeling \cite{map}, and elsewhere.

Recently, Ackerman et al. \cite{afps} defined a \emph{natural $(k,l)$-grid} to be a set of $k$ pairwise disjoint edges that all cross another set of $l$ pairwise disjoint edges.  They conjectured

\begin{conjecture}[]
\label{con:grid}
Given fixed constants $k,l \geq 1$ there exists another constant $c_{k,l}$,
such that any geometric graph on $n$ vertices with no natural $(k,l)$-grid
has at most $c_{k,l}n$ edges.
\end{conjecture}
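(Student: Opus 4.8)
The plan is to attack Conjecture~\ref{con:grid} by induction on $k+l$, reducing the bipartite crossing pattern of a natural $(k,l)$-grid to the two ``monochromatic'' extremal problems that are already understood, and to smaller grids. Two near-linear bounds will serve as black boxes: by the theorem of T\'oth \cite{toth} a geometric graph on $n$ vertices with no $k$ pairwise disjoint edges has only $O(k^2 n)$ edges, and the corresponding bound for geometric graphs with no $l$ pairwise crossing edges \cite{tothvaltr} is near-linear, of $O(n\log n)$ type (and linear for small $l$). Given a geometric graph $G$ on $n$ vertices with no natural $(k,l)$-grid, I would first dispose of the two extreme regimes: if $G$ has no $k$ pairwise disjoint edges we are done by T\'oth, and if $G$ has no $l$ pairwise crossing edges we are done by the second bound. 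The interesting case is when $G$ simultaneously contains large pairwise-disjoint families and large pairwise-crossing families; the whole difficulty is that a natural $(k,l)$-grid is neither of these but their \emph{product}, namely a complete bipartite crossing pattern $K_{k,l}$ between two antichains.

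Next I would localise with a balanced separating line. Place a vertical line $\ell$ at the median $x$-coordinate, splitting the vertices into halves $L$ and $R$; recurse on the edges contained in $L$ and in $R$, and handle separately the set $E_\ell$ of edges crossing $\ell$. The edges of $E_\ell$ are linearly ordered by the height at which they meet $\ell$, and---as in the Pach--T\"or\H{o}csik encoding---the crossing and disjointness relations among them are governed by comparing the orders of their left and right endpoints, so that pairwise-disjoint subfamilies and pairwise-crossing subfamilies of $E_\ell$ become increasing and decreasing chains for a bounded number of linear orders. Using the no-$(k,l)$-grid hypothesis on $E_\ell$ together with a Dilworth-type argument, I expect to bound $|E_\ell|$ by $O_{k,l}(n)$ at each level of the recursion. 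This scheme yields an $O_{k,l}(n\log n)$ bound---the same shape as the paper's $(k,k)$-crossing-family theorem---and is the part I am confident can be pushed through.

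To reach the linear bound demanded by the conjecture one must remove this logarithmic factor, and that is where I expect the real obstacle to lie. The two known near-linear bounds avoid recursion entirely: they encode a single antichain (all disjoint) or a single chain (all crossing) by comparisons of four endpoint coordinates and finish by Dilworth/Erd\H{o}s--Szekeres, with no separator and hence no $\log$ loss. A natural $(k,l)$-grid, by contrast, is a complete bipartite pattern, and I know of no order-theoretic encoding under which ``two disjoint antichains that completely cross'' becomes a forbidden chain or antichain condition; the product structure resists the very device that makes the monochromatic cases linear. Accordingly, the cleanest route to the full conjecture would be a Tur\'an-type \emph{grid-Ramsey lemma}: a proof that there is a constant $C_{k,l}$ such that every geometric graph with at least $C_{k,l}\,n$ edges already contains a natural $(k,l)$-grid. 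Current positive-fraction and same-type results in the spirit of Ackerman et al. \cite{afps} produce such grids only once the number of edges exceeds $n$ by a growing factor, which is precisely the source of the extra logarithm.

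I would therefore concentrate the effort on two reductions that might supply the missing idea. First, the inductive step $(k,l-1)\to(k,l)$: if $G$ contains a natural $(k,l-1)$-grid with disjoint part $A$, then the family of edges crossing every edge of $A$ can contain no $l$ pairwise disjoint members, so by T\'oth it is small; the task is to convert this \emph{local} smallness into a global linear bound by a charging scheme that assigns each edge of $G$ to a bounded number of witness grids. Second, I would try to replace the balanced line by a \emph{linear-size} balanced separator whose removal destroys all long edges at once, so the recursion splits into genuinely disjoint subproblems without re-charging the crossing edges at every level; combined with the paper's lemma relating the extremal function of a configuration to that of two completely disjoint copies, such a separator could collapse the recursion tree and eliminate the $\log$. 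Making either reduction work for the bipartite pattern---rather than for a single chain or antichain---is, in my view, the crux of the conjecture, and the case $(k,l)=(2,1)$ settled in this paper is the smallest instance in which the product structure can still be analysed by hand.
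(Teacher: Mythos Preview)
The statement you are trying to prove is Conjecture~\ref{con:grid}, which the paper does \emph{not} prove; it is quoted from Ackerman, Fox, Pach and Suk~\cite{afps} as an open problem, and the paper only records the partial results of Theorems~\ref{grid1}--\ref{grid3} toward it. So there is no ``paper's own proof'' to compare against, and your proposal should be read as a research outline rather than a proof.

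On its own terms, your outline is honest about where it falls short: the separator-plus-Dilworth scheme you describe is exactly the mechanism behind the known near-linear bounds (Theorem~\ref{grid1} gives $O(n\log^2 n)$ for the natural $(k,k)$-grid), and you correctly identify that removing the logarithmic loss is the entire content of the conjecture. The two reductions you propose in the last paragraph are reasonable directions, but neither is carried out; in particular, the charging scheme for the inductive step $(k,l-1)\to(k,l)$ is asserted rather than constructed, and the ``linear-size balanced separator whose removal destroys all long edges at once'' does not exist in general for geometric graphs, which is precisely why the recursion costs a logarithm. Note also a small slip: the paper's Theorem~\ref{thm:disjoint} relating $ex(F_2,n)$ to $ex(F,n)$ concerns two \emph{completely disjoint} copies of a configuration and is tailored to the dual $(k,k)$-crossing-family problem, not to natural grids, so invoking it here would require an additional argument. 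In short, what you have is a plausible plan that reproduces the state of the art and names the obstacle, but it does not close the gap to $c_{k,l}n$, and the conjecture remains open in the paper as well.
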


\noindent They were able to show,

\begin{theorem}
\label{grid1}
\emph{\cite{afps}} For fixed $k$, an $n$-vertex geometric graph with no natural $(k,k)$-grid
has at most $O(n\log^2 n)$ edges.
\end{theorem}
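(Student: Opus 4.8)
The plan is a divide-and-conquer on the vertex set, reducing the bound to one estimate about geometric graphs all of whose edges cross a common line. Assume the vertices are in general position and let $f(n)$ be the maximum number of edges of an $n$-vertex geometric graph with no natural $(k,k)$-grid. Given such a $G$, take a vertical line $\ell$ through no vertex splitting $V(G)$ into sets $V_1,V_2$ of size at most $\lceil n/2\rceil$. The subgraphs induced on $V_1$ and on $V_2$ have no natural $(k,k)$-grid, so they carry at most $2f(\lceil n/2\rceil)$ edges in total, and every other edge has one endpoint on each side of $\ell$, hence crosses $\ell$. Therefore the theorem follows once we prove the \emph{Key Lemma}: an $n$-vertex geometric graph all of whose edges cross one fixed line and with no natural $(k,k)$-grid has at most $c_kn\log n$ edges. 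Indeed, the Key Lemma gives $f(n)\le 2f(\lceil n/2\rceil)+c_kn\log n$, which unrolls to $f(n)=O(n\log^2n)$ with the implied constant depending only on $k$.

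To prove the Key Lemma, write $g(n)$ for the maximum it bounds, fix the vertical line $\ell$, and let $V_L,V_R$ be the vertices on its two sides. By the ham-sandwich theorem there is a line $h$ bisecting $V_L$ and $V_R$ simultaneously; let $A,B$ be its two sides. An edge between $V_L\cap A$ and $V_R\cap A$ lies in the half-plane $A$ and spans at most $n/2+O(1)$ vertices, and similarly for $B$; these two subgraphs are again geometric graphs all of whose edges cross $\ell$ with no natural $(k,k)$-grid, so by recursion they carry at most $2g(n/2+O(1))$ edges. Every remaining edge joins $A$ to $B$, hence crosses $h$, and so crosses \emph{both} $\ell$ and $h$. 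It thus suffices to show that a geometric graph all of whose edges cross two fixed lines and with no natural $(k,k)$-grid has only $O_k(n)$ edges: such edges run between opposite sectors of the two lines, and within each of the constantly many ``passage types'' (according to which of the remaining two sectors the segment goes through) one verifies that two edges cross precisely when their endpoints interleave in the cyclic order around the point $\ell\cap h$, so each passage type is combinatorially a convex geometric graph; for convex geometric graphs the implication ``no natural $(k,k)$-grid $\Rightarrow O_k(n)$ edges'' is a routine Dilworth/Pach--T\"or\H{o}csik-type argument on the cyclic order. This yields $g(n)\le 2g(n/2+O(1))+c'_kn$, hence $g(n)=O_k(n\log n)$. (If $G$ has no $k$ pairwise disjoint edges in the first place, the Pach--T\"or\H{o}csik and T\'oth bounds already give $O_k(n)$ edges, so no recursion bottoms out on a pathological instance.)

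The work is concentrated in the Key Lemma, and the fragile point is the two-lines case. The subtlety is that the piece of an edge on either side of a line is a segment with a single endpoint on that line, not a chord of a disk, so the crossing and disjointness relations are not literally pattern-avoidance in one permutation; one must choose the comparison orders carefully and---this is where such an argument can fail---check that a large substructure which is monotone in the chosen orders genuinely realizes the \emph{complete} bipartite crossing between the two families of pairwise disjoint edges required by the definition of a natural $(k,k)$-grid, not merely many crossings. Propagating the dependence on $k$ through the two nested recursions is routine and does not affect the $n\log^2 n$ shape of the bound.
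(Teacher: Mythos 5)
This theorem is quoted from \cite{afps} and is not proved anywhere in the present paper, so there is no in-house proof to compare against line by line; I can only judge your argument on its own terms. Your two outer reductions are sound: the halving-line step giving $f(n)\le 2f(\lceil n/2\rceil)+g(n)$, and the ham-sandwich step giving $g(n)\le 2g(n/2+O(1))+(\hbox{two-line case})$, both close correctly and would indeed yield $O_k(n\log^2 n)$ from a linear bound in the two-line case. (This is essentially the same machinery the paper itself deploys in Section 2 for Theorem~\ref{thm:disjointL}.)

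The gap is exactly where you yourself point: the two-line base case, which is the heart of the theorem, is not established. The claim that within a passage type ``two edges cross precisely when their endpoints interleave in the cyclic order around $\ell\cap h$'' is false. Take $\ell,h$ to be the coordinate axes and the passage type running from quadrant I through quadrant II to quadrant III. Let $e_1$ join $(1,100)$ to $(-100,-1)$ (the line $y=x+99$) and $e_2$ join $(0.0005,\,0.1)$ to $(-0.1,\,-0.01)$. Both cross the positive $y$-axis and the negative $x$-axis, so they have the same passage type; the four endpoints have angles approximately $89.43^{\circ},\ 89.71^{\circ},\ 180.57^{\circ},\ 185.71^{\circ}$, alternating between the two edges, so they interleave around the origin; yet $e_2$ lies within distance $0.11$ of the origin while every point of $e_1$ is at distance about $70$ from it, so the edges are disjoint. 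The reason is the one you half-name: in the two extreme quadrants the edge pieces have a free endpoint, so the angular order around $\ell\cap h$ does not control crossings there, and the passage types are \emph{not} combinatorially convex geometric graphs. Moreover, even granting a correct reduction to convex position, ``no natural $(k,k)$-grid implies $O_k(n)$ edges'' for convex geometric graphs is not a routine Dilworth argument: the known route (which this paper alludes to in connection with Conjecture~\ref{circle}) goes through the Marcus--Tardos theorem on excluded permutation matrices. A correct treatment of the doubly-grounded case must work with the linear orders of the points where the edges meet the two lines---where interleaving does characterize crossings of the middle portions---and separately control crossings occurring in the end quadrants. As written, the proof of the main lemma is missing, so the argument does not yet establish the theorem.
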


\begin{theorem}
\label{grid2}
\emph{\cite{afps}} An $n$-vertex geometric graph with no natural $(2,1)$-grid
has at most $O(n)$ edges.
\end{theorem}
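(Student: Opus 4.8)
The starting point is a reformulation of the hypothesis. A natural $(2,1)$-grid is exactly a pair of disjoint edges together with a third edge crossing both of them, so a geometric graph $G$ has no natural $(2,1)$-grid if and only if, for every edge $f$, the set $C(f)$ of edges crossing $f$ is \emph{pairwise intersecting}: any two edges of $C(f)$ either cross each other or share an endpoint. Fix such a $G$ on $n$ vertices; we may assume $G$ is drawn in general position. First I would delete every edge that no other edge crosses; these edges form a plane geometric graph, so there are at most $3n-6$ of them, and it remains to bound the number of crossed edges.

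Next I would extract the geometry of the bundles $C(f)$. If $g$ crosses $f$ then $g$ shares no endpoint with $f$, hence $g$ meets the relative interior of $f$ and has its two endpoints strictly on opposite sides of the line $\ell$ supporting $f$; in particular neither endpoint of $g$ is an endpoint of $f$. Thus $C(f)$ is a pairwise-intersecting geometric graph all of whose edges are transversal to $\ell$, and by the straight-line thrackle bound (a pairwise-intersecting geometric graph has at most as many edges as vertices) $|C(f)|$ is at most the number of vertices spanned by $C(f)$ — a set disjoint from the two endpoints of $f$. Ordering the crossing points of $C(f)$ along $f$ and the edges of $C(f)$ by the direction in which they cross $\ell$, the pairwise-intersecting requirement further pins down a rigid interleaving of the edges on the two sides of $\ell$. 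The quantitative point I would carry forward is: a large set of edges all crossing one edge must use a proportionally large set of pairwise-distinct vertices, since edges that pairwise cross share no endpoints at all.

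To globalize, I would run a recursion on a balanced line separator together with a charging argument: take a line $\ell_0$ splitting the $n$ vertices into two halves, recurse inside each half, and bound the edges that meet $\ell_0$. The aim is to show that, summed over all levels of the recursion, there are only $O(n)$ such ``long'' edges, by assigning each long edge to one of its endpoints so that the rigidity of the bundles $C(f)$ forces every vertex to be charged only boundedly often in total; combining with the $3n-6$ uncrossed edges then yields $|E(G)| = O(n)$. (A direct charging of crossed edges to vertices, weighted according to the bundle structure, is an alternative route to the same conclusion.)

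The crux, and the step I expect to be the main obstacle, is exactly this globalization: converting the purely local fact ``edges crossing a common edge are pairwise intersecting'' into a genuinely linear bound, that is, controlling how the possibly $\Theta(n)$-sized bundles of pairwise-crossing edges overlap one another and interact with the separator so that no vertex absorbs too much charge. Everything up to logarithmic factors is soft: since a graph with no natural $(2,1)$-grid also has no natural $(2,2)$-grid, Theorem~\ref{grid1} already gives the bound $O(n\log^2 n)$, and a more careful separator recursion shaves one logarithm; removing the last logarithm, using the rigidity of $C(f)$ and a charging scheme with no vertex reuse, is where the real work lies.
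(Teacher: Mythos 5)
You should first note that the paper you are reading does not actually prove this statement: Theorem~\ref{grid2} is quoted from \cite{afps}, so the only proof to compare against is the one in that reference, and your proposal must stand on its own. Your local analysis does stand up: having no natural $(2,1)$-grid is equivalent to saying that for every edge $f$ the bundle $C(f)$ of edges crossing $f$ is pairwise intersecting; the uncrossed edges form a plane subgraph with at most $3n-6$ edges; every edge of $C(f)$ has its endpoints on opposite sides of the line supporting $f$; and the Hopf--Pannwitz/Perles bound gives $|C(f)|\le |V(C(f))|$. The observation that no $(2,1)$-grid implies no $(2,2)$-grid, so that Theorem~\ref{grid1} already yields $O(n\log^2 n)$, is also correct.

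The genuine gap is exactly the step you yourself flag as ``the crux'': the globalization. A balanced separator recursion has $\Theta(\log n)$ levels, and cutting $O(n)$ edges per level only reproduces an $O(n\log n)$ bound; to get $O(n)$ you must show the \emph{total} charge over all levels is linear, i.e.\ that each vertex is charged $O(1)$ times globally. You assert that ``the rigidity of the bundles $C(f)$ forces every vertex to be charged only boundedly often in total,'' but you never specify the charging rule, which endpoint of a long edge absorbs the charge, or why a single vertex cannot be charged at many different levels of the recursion. The ``rigid interleaving'' of a bundle $C(f)$ is likewise invoked but never pinned down or used quantitatively. Since this is precisely where the content of the theorem lives --- everything before it is soft and loses logarithms --- the proposal is an accurate reduction plus a correct diagnosis of the hard step, not a proof. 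To close it you would need either a concrete one-shot charging scheme for crossed edges (the route taken in \cite{afps}, which does not use a separator recursion) or a recursion in which the per-level cost decays geometrically, and neither is supplied.
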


\begin{theorem}
\label{grid3}
\emph{\cite{afps}} An $n$-vertex simple topological graph with no natural $(k,k)$-grid
has at most $O(n\log^{4k-6}n)$ edges.
\end{theorem}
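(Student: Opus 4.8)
The plan is an induction that strips off the grid complexity one layer at a time, driven by a divide-and-conquer over the vertex set. Let $f_{k,l}(n)$ denote the maximum number of edges of an $n$-vertex simple topological graph with no natural $(k,l)$-grid; the theorem is the diagonal statement about $f_{k,k}(n)$, and I will aim at $f_{k,l}(n)=O(n\log^{2k+2l-6}n)$ for all $k,l\ge 2$. A subtlety visible already here is that forbidding a natural $(k,l)$-grid does \emph{not} forbid the smaller $(k-1,l)$-grids, so the induction cannot simply invoke the $(k-1,l)$ bound; instead it must confine the smaller grids to a controlled set of edges and remove them before that bound becomes usable.

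\emph{Base case.} For $k=l=1$ the absence of a natural $(1,1)$-grid means no two vertex-disjoint edges cross, and since adjacent edges never cross in a simple drawing the drawing is plane, giving $f_{1,1}(n)\le 3n-6$. For $k=l=2$ I would establish $f_{2,2}(n)=O(n\log^2 n)$ directly: the absence of a natural $(2,2)$-grid constrains, for every edge $e$, how the edges crossing $e$ may pairwise interact, and a dyadic decomposition over the $O(\log n)$ length scales of the drawing together with a merging estimate across scales yields the bound. Every pair $(k,l)$ with $k,l\ge 2$ is then reached from $(2,2)$ by repeatedly decrementing whichever coordinate exceeds $2$ in the step below (the case $l>2$ handled via the symmetry $(k,l)\leftrightarrow(l,k)$).

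\emph{Recursive step.} Suppose $G$ has $n$ vertices, $m$ edges, no natural $(k,l)$-grid, $k\ge 3$, and assume the $(k-1,l)$ bound. The divide-and-conquer uses a separator for the planarization of $G$ (each crossing becomes a dummy vertex); the difficulty that $G$ may carry far more crossings than edges --- as a bundle of pairwise crossing edges does --- is dealt with separately, by checking that crossing-heavy pieces are sparse, so that after a preliminary cleanup the planarization has $O(n+m\,\mathrm{polylog}\,n)$ vertices. Then there is a closed curve $\gamma$, which may be taken to follow edges of $G$, that meets a set $B$ of edges and splits the remaining vertices into two parts of size at most $\tfrac{2}{3}n$ with no edge between them; every edge not in $B$ lies wholly on one side, so
\[
m\ \le\ f_{k,l}(n_1)+f_{k,l}(n_2)+|B|,\qquad n_1+n_2\le n,\quad n_1,n_2\le\tfrac{2}{3}n .
\]
The crux is $|B|=O\!\big(f_{k-1,l}(n)\,\mathrm{polylog}\,n\big)$. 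Every edge of $B$ crosses the single curve $\gamma$, which is built from edges of $G$; hence a natural $(k-1,l)$-grid sitting inside $B$ and positioned suitably along $\gamma$ can be completed to a natural $(k,l)$-grid of $G$ by adjoining one further grid edge (a piece of $\gamma$). Cutting $\gamma$ into boundedly many arcs and sorting $B$ by which arcs its edges meet and in what order splits $B$ into $O(\log n)$ classes --- the source of the logarithmic losses --- each of which is therefore free of natural $(k-1,l)$-grids and so has at most $f_{k-1,l}(n)$ edges. Substituting into the recursion and unwinding the $O(\log n)$-deep recursion tree gives $f_{k,l}(n)=O\!\big(f_{k-1,l}(n)\log^2 n\big)$; iterating from $f_{2,2}(n)=O(n\log^2 n)$ along the $2(k-2)$ coordinate-decrements that join $(k,k)$ to $(2,2)$ yields $f_{k,k}(n)=O\!\big(n\log^{\,2+4(k-2)}n\big)=O(n\log^{4k-6}n)$.

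\emph{Main obstacle.} The real work is the boundary estimate --- turning ``a natural $(k-1,l)$-grid in $B$ extends to a natural $(k,l)$-grid of $G$'' into an actual argument. One must use that $\gamma$ is made of edges of $G$, actually produce the extra grid edge, and verify every pairwise-disjointness condition across the cut; and this interacts with the problem flagged at the outset, namely that $(k-1,l)$-grids are not forbidden in $G$, so both the choice of $\gamma$ and the splitting of $B$ must be arranged to tolerate whatever smaller grids happen to be present. Handling crossing-heavy subdrawings (of which the bundle of pairwise crossing edges is the extreme case) when constructing the separator, and proving the $(2,2)$ base case, are the two remaining technical points; and keeping each logarithmic loss to two powers per coordinate-decrement is exactly what pins the exponent at $4k-6$.
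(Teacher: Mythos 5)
Theorem~\ref{grid3} is quoted from \cite{afps}; the present paper contains no proof of it, so the only in-paper benchmark for your outline is the proof of Theorem~\ref{proof} in Section~4, which handles a closely related statement by a genuinely different mechanism from the one you propose. Your outline does get the arithmetic shape right (a loss of $\log^{2}n$ per decrement of a grid parameter, landing on the exponent $4k-6$), but each of its three load-bearing steps is either missing or would fail as described. First, the separator step: you planarize $G$ and invoke a separator, conceding that the planarization may have many more than $O(n+m\,\mathrm{polylog}\,n)$ vertices and promising a ``preliminary cleanup'' of crossing-heavy pieces. No such cleanup is available: a bundle of $m$ pairwise crossing edges contains no natural $(k,l)$-grid for any $k,l\ge 2$ (each side of a natural grid must be pairwise \emph{disjoint}), yet it has $\binom{m}{2}$ crossings and is in no sense sparse. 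This is precisely why the machinery actually used in this setting (Lemma~\ref{bisect}, as deployed in Section~4 and in Pach--T\'oth's disjoint-edges paper) bounds the \emph{bisection width} via the \emph{odd-crossing number} of a redrawn bipartite copy of $G$, a quantity controlled by the number of \emph{disjoint} pairs of edges rather than by the number of crossings; one then needs a separate case showing that if there are many disjoint pairs, some edge has a large disjoint neighbourhood to which a lower-order extremal bound applies. Your plan contains no substitute for this step.

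Second, your central ``boundary estimate'' is asserted rather than proved, and as stated it is not even well-formed: a piece of the separator curve $\gamma$ is a concatenation of portions of several edges of $G$ and cannot serve as the extra edge of a natural $(k,l)$-grid. Worse, there is a structural obstruction to the single-edge pivoting you rely on: to enlarge the $k$-side of a grid by an edge $e$ one needs $e$ disjoint from all $k-1$ edges of that side \emph{and} crossing all $l$ edges of the other side, but the edges crossing $e$ automatically violate the first condition and the edges disjoint from $e$ automatically violate the second, so the naive ``remove one edge, drop one parameter'' induction does not close for grids the way it does for families of pairwise disjoint edges. You flag this as ``the real work'' and leave it undone. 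Third, the base case $f_{2,2}(n)=O(n\log^{2}n)$ is justified by a ``dyadic decomposition over length scales,'' which is meaningless for topological graphs, whose edges are arbitrary arcs defined only up to homeomorphism of the plane; Theorem~\ref{grid1} gives $O(n\log^{2}n)$ only for \emph{geometric} graphs, and since $4k-6=2$ at $k=2$, your base case is not easier than the theorem itself. As it stands the proposal is a plausible decomposition of the exponent, not a proof.
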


\noindent As a \emph{dual} version of the natural $(k,l)$-grid, we define a \emph{$(k,l)$-crossing family} to be a pair of edge subsets $E_1,E_2$ such that

\begin{enumerate}

\item $|E_1| = k$ and $|E_2| = l$,

\item the edges in $E_1$ are pairwise crossing,

\item the edges in $E_2$ are pairwise crossing,

\item every edge in $E_1$ is disjoint to every edge in $E_2$.

\end{enumerate}

\noindent We conjecture

\begin{conjecture}[]
\label{con}
Given fixed constants $k,l \geq 1$ there exists another constant $c_{k,l}$,
such that any geometric graph on $n$ vertices with no $(k,l)$-crossing family has at most $c_{k,l}n$ edges.
\end{conjecture}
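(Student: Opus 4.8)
The plan is to prove the conjecture by induction on $k+l$, reducing the general straight-line case to two more tractable ingredients: the linear bound for geometric graphs with no pair of disjoint edges (which is exactly the $(1,1)$ case, already cited in the introduction via \cite{lov},\cite{fulek}) and an extremal bound for edges that all cross one fixed line. First I would record the easy structure: a $(1,1)$-crossing family is simply a pair of disjoint edges, so a graph avoiding it has $O(n)$ edges; and the property of avoiding a $(k,l)$-crossing family is symmetric in $k$ and $l$, so I may assume $k \ge l$. The inductive step will try to strip off one edge from one of the two clusters, dropping to a $(k-1,l)$ or $(k,l-1)$ instance.

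For the inductive step, fix a vertical line $\ell$ through a point with half the vertices on each side (a median cut), splitting $E(G)$ into edges lying in the left half-plane, edges lying in the right half-plane, and \emph{transversal} edges that meet $\ell$. The left and right parts are handled by recursion on $n$. The heart of the argument is a \emph{transversal lemma}: if a set $T$ of $m$ edges all cross a common line $\ell$ and $T$ contains no $(k,l)$-crossing family, then $m$ is linear in the number of endpoints. To prove this I would order the edges of $T$ by the height at which they meet $\ell$; two transversal edges cross precisely when their left endpoints and their right endpoints appear in opposite orders, so $T$ is essentially a bipartite, convex-type configuration in which a crossing cluster is an interleaving family of chords. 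In this setting one pushes the Capoyleas--Pach style count for pairwise-crossing chords, now additionally forbidding \emph{two} interleaving families that live on disjoint sub-arcs (the two clusters $E_1,E_2$), and the inductive hypothesis in $k+l$ supplies the control once one cluster is pinned down.

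Assembling the pieces, the top-level transversal edges contribute $O(n)$ by the transversal lemma, and recursing on the two halves yields a recurrence of the shape $f(n) \le f(n/2) + f(n/2) + O(n)$, whose solution is $O(n\log n)$ rather than the conjectured $O(n)$. \textbf{The main obstacle is exactly this logarithmic loss.} The extra factor arises because the oblivious median cut re-examines the same vertices at all $\log n$ levels and the transversal counts do not decay down the recursion tree. To reach the linear bound demanded by the conjecture one must replace the median cut by an adaptive, weighted decomposition — for instance charging each transversal edge to a potential built from the sizes of the two sub-instances it separates so that the total telescopes to $O(n)$, or using a cutting refined only where the local edge density is high — while simultaneously maintaining the ``no two separated clusters'' invariant across levels. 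I expect this telescoping-versus-invariant tension to be the genuine difficulty in general; the two cases this paper settles unconditionally are precisely the instances where it can be sidestepped, namely the $(2,1)$ bound, where a direct argument beats the recursion outright, and the $(k,k)$ bound, where a single doubling step (two completely disjoint copies of a crossing family) absorbs the loss into exactly one logarithm.
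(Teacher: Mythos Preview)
This statement is Conjecture~\ref{con}, which the paper leaves open in general; the paper establishes only the $(2,1)$ case (Theorem~\ref{thm:21fam}) and the $(k,k)$ case up to a $\log n$ factor (Theorem~\ref{geo}). You acknowledge this, and you correctly arrive at $O(n\log n)$ rather than $O(n)$ and flag the logarithmic loss as the real obstruction. So there is no ``paper's proof'' of the full conjecture to compare against; what remains is to assess your sketch as a route to the partial results.

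Even for that purpose there is a concrete gap in the transversal lemma. Your assertion that two segments crossing a common vertical line $\ell$ cross each other ``precisely when their left endpoints and their right endpoints appear in opposite orders'' is false for general straight-line segments; it holds only in the degenerate bipartite situation where all left endpoints lie on one fixed line and all right endpoints on another. For instance, the segments with endpoints $(-2,0),(1,10)$ and $(-1,1),(2,5)$ both cross the $y$-axis and have opposite $y$-orders on each side, yet the segments themselves do not meet (the supporting lines intersect at $x=-13/6$, to the left of both segments). Hence the reduction to a Capoyleas--Pach chord count does not go through, and without it the induction on $k+l$ has no engine: you never specify how, given a graph with no $(k,l)$-crossing family, one actually isolates a subconfiguration to which the $(k-1,l)$ hypothesis applies. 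The paper's argument for the $(k,k)$ bound (Theorems~\ref{thm:disjoint} and~\ref{thm:disjointL}) also uses a halving-line-plus-recursion scheme, but it succeeds because the transversal piece is supplied by Valtr's \emph{known} linear bound $ex_L(k\text{ pairwise crossing edges},n)=O(n)$ rather than by a new transversal lemma for $(k,l)$-crossing families; for general $(k,l)$ no analogous input is available, which is precisely why the conjecture remains open.
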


\noindent It is not even known if all $n$-vertex geometric graphs with no $k$ pairwise crossing edges has $O(n)$ edges.
  The best known bound is due to Valtr \cite{valtr}, who showed that this is at most $O(n\log n)$ for every fixed $k$.
  We extend this result to $(k,k)$-crossing families by proving the following theorem.

\begin{theorem}
\label{geo}
An $n$-vertex geometric graph with no $(k,k)$-crossing family
has at most $c_kn\log n$ edges, where  $c_k$ is a constant that depends only on $k$.
\end{theorem}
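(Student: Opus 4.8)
The plan is to reduce the $(k,k)$-crossing-family problem to the single-family problem that Valtr already solved, by exploiting a divide-and-conquer scheme on a geometric graph $G$ with no $(k,k)$-crossing family. Suppose $G$ has $n$ vertices and $m$ edges; I want to prove $m \le c_k n\log n$. Think of $G$ as drawn in the plane, and fix a vertical line $\ell$ that splits the vertex set into two halves $V_L$ and $V_R$ of size roughly $n/2$ each (a ham-sandwich/median argument). Edges of $G$ fall into three classes: edges with both endpoints in $V_L$, edges with both endpoints in $V_R$, and \emph{crossing edges} that go from $V_L$ to $V_R$ (hence all intersect $\ell$). The first two classes are handled recursively on subgraphs with $n/2$ vertices. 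The whole theorem therefore follows if I can show that the number of crossing edges is at most $c_k' n$ for a constant depending only on $k$; the recursion $f(n) \le 2f(n/2) + c_k' n$ then gives $f(n) = O(n\log n)$.

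So the heart of the matter is bounding the $V_L$-to-$V_R$ edges. Here is where a $(k,k)$-crossing family would appear: order the crossing edges $e_1,\dots,e_t$ by the height at which they meet $\ell$. If $G$ has no $(k,k)$-crossing family, then in particular it cannot contain $k$ pairwise crossing crossing-edges that are collectively disjoint from another $k$ pairwise crossing crossing-edges. The idea is to first apply Valtr's theorem \cite{valtr} \emph{locally}: consider the crossing edges incident to a fixed small ``strip'' of consecutive heights on $\ell$, or more cleverly, partition the crossing edges into groups so that within each group one can forbid $k$ pairwise crossing edges. Concretely, I would argue that if there were $N$ crossing edges with $N$ large compared to $n$, one could find a set of $k$ pairwise crossing crossing-edges, delete their endpoints and a bounded neighborhood, and still be left with enough crossing edges to find another $k$ pairwise crossing ones that are disjoint from the first $k$ — contradicting the hypothesis. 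Making ``disjoint'' work is the subtle point: two crossing edges through $\ell$ at very different heights need not be disjoint in the plane, so I will need a geometric separation argument (e.g. choosing the second family far enough away along $\ell$, or cutting the plane by a second line) to guarantee genuine disjointness, not just non-crossing.

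In more detail, the key lemma I would aim for is: \emph{there is a constant $c_k'$ such that any geometric graph in which every edge crosses a fixed line $\ell$, and which has no $(k,k)$-crossing family, has at most $c_k' n$ edges.} To prove it, apply Valtr's $O(n\log n)$ bound to conclude that locally the graph has few $k$-pairwise-crossing configurations, then use a ``weight-shifting'' or interval-graph argument along $\ell$: assign to each crossing edge the interval it induces and use the fact that a large family of pairwise-crossing segments through $\ell$ that cannot be split into two disjoint sub-families of size $k$ forces a structural restriction (bounded ``depth'' of overlap). I expect the main obstacle to be exactly this local-to-global step — transferring Valtr's single-family bound into a linear bound for the two-disjoint-families setting while controlling disjointness across the dividing line; this is presumably where the announced ``more general result relating the extremal function of $F$ to that of two disjoint copies of $F$'' does the work, and I would try to phrase the crossing-edge lemma as an instance of that general principle (with $F$ = ``$k$ pairwise crossing edges''), so that the $\log n$ loss comes solely from the recursion and the single-family input bound.
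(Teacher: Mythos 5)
Your overall divide-and-conquer frame is reasonable, but the proof hinges entirely on your ``key lemma'' --- that a geometric graph all of whose edges cross a fixed line $\ell$ and which has no $(k,k)$-crossing family has only $O_k(n)$ edges --- and that lemma is neither proved in your sketch nor available: it is essentially the one-line special case of Conjecture~\ref{con}, which is open. The paper only establishes the weaker bound $ex_L(F_2,n)=O\bigl((n+ex_L(F,2n))\log n\bigr)$ (Theorem~\ref{thm:disjointL}), which for $F$ equal to ``$k$ pairwise crossing edges'' gives $O(n\log n)$, not $O(n)$, for the edges stabbed by a single line. If you feed that $O(n\log n)$ bound into your outer recursion $f(n)\le 2f(n/2)+g(n)$ you only obtain $O(n\log^2 n)$, which falls short of the theorem. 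Your gestures toward ``weight-shifting'' and ``interval-graph arguments along $\ell$'' do not constitute an argument for the linear bound, and the disjointness difficulty you yourself flag (edges meeting $\ell$ at different heights need not be disjoint) is precisely where such an argument would have to do real work.

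The paper avoids this trap by not recursing at the top level at all. It rotates a halving line into a position where the numbers of edges lying entirely on its two sides differ by at most $2n$; since two vertex-disjoint sets of $k$ pairwise crossing edges, one on each side of the line, would form a $(k,k)$-crossing family, at least one side contains no $k$ pairwise crossing edges and hence has at most $ex(F,n/2)=O(n\log n)$ edges by Valtr's theorem, and by the balancing so does the other side. Thus all but $O(n\log n)$ edges cross the line, and those are handled by Theorem~\ref{thm:disjointL} --- an inner Ham--Sandwich divide-and-conquer on the one-line quantity $ex_L(F_2,\cdot)$, which is where the single $\log n$ factor actually originates, seeded by Valtr's \emph{linear} bound for one-line geometric graphs with no $k$ pairwise crossing edges. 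To repair your write-up you should either prove your linear key lemma (which would be a genuinely new result, stronger than anything in the paper) or replace the outer recursion with the paper's balanced-halving-line step.
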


Let $F$ denote a geometric graph. 
We say that a geometric graph $G$ contains $F$ as a geometric subgraph if $G$ contains a subgraph $F'$ isomorphic to $F$ such that two edges in $F'$ cross
if and only if the two corresponding edges cross in $F$.

We define $ex(F,n)$ to be the extremal function of $F$, i.e.
the maximum number of edges a geometric graph on $n$ vertices can have without
containing $F$ as a geometric subgraph.  Similarly, we define $ex_L(F,n)$  to be the extremal function of $F$,
if we restrict ourselves to the geometric graphs all of whose edges can be hit by one line.

Let $F_2$ denote a geometric graph,
which consists of two completely disjoint copies of a geometric graph $F$.
We prove Theorem \ref{geo} by a straightforward application of the following result.

 \begin{theorem}
\label{thm:disjoint}
$ex(F_2,n)= O((ex_L(F,2n)+n)\log n+ex(F,n))$
\end{theorem}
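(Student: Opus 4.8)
I want to bound the number of edges in an $n$-vertex geometric graph $G$ that contains no $F_2$, i.e., no two completely disjoint copies of $F$. The natural idea is a divide-and-conquer on the plane: find a line $\ell$ that splits the vertex set into two roughly equal halves $V_1, V_2$, recurse on the two induced subgraphs $G[V_1]$ and $G[V_2]$, and separately handle the "crossing" edges that go between $V_1$ and $V_2$ (equivalently, the edges hit by $\ell$). Since $G$ has no $F_2$, each of $G[V_1]$ and $G[V_2]$ has no $F_2$ either, so the recursion gives the $(\log n)$ factor; the content is in controlling the edges crossing $\ell$.

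**The main step: edges crossing the dividing line.** Let $E_\ell$ be the set of edges of $G$ hit by $\ell$, and let $G_\ell$ be the geometric graph on vertex set $V$ with edge set $E_\ell$. The plan is to show $|E_\ell| = O(ex_L(F, 2n) + n + ex(F,n))$ roughly — more precisely, enough of a bound that the recursion closes. The key observation: if $G_\ell$ contained a single copy of $F$ all of whose edges cross $\ell$, then reflecting or otherwise "cloning" that copy to the far side would... no — instead, the right move is that $G_\ell$ together with a copy on the other side would produce $F_2$. Concretely, suppose for contradiction $G_\ell$ contains $2$ vertex-disjoint copies of $F$, say $F^{(1)}$ on vertices near/left-ish and $F^{(2)}$; since all edges of both copies are hit by $\ell$, I want to argue they can be made completely disjoint from each other, or at least that one can extract from a large $G_\ell$ two copies of $F$ that are completely disjoint, yielding $F_2$ and a contradiction. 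So $G_\ell$ contains at most one vertex-disjoint-ish copy of $F$; more carefully, $G_\ell$ has no two completely disjoint copies of $F$, but we need more — we need that $G_\ell$ essentially has no $F$ at all beyond a bounded region, which is where $ex_L$ enters: after a further partition of $E_\ell$ into $O(1)$ groups or a secondary decomposition, each group of $\ell$-crossing edges whose endpoints lie on one side of $\ell$ can be swept so that they are "hit by one line," giving the $ex_L(F, 2n)$ term. The $+ex(F,n)$ term should absorb the base/leftover case where the vertex set is too small to split, or where a bounded-size copy of $F$ is unavoidable.

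**Carrying out the recursion.** Writing $g(n) = ex(F_2, n)$, the argument above should yield a recurrence of the shape
\[
g(n) \le g(n_1) + g(n_2) + O\big(ex_L(F,2n) + n + ex(F,n)\big), \qquad n_1 + n_2 = n,\ n_1,n_2 \le \tfrac{2n}{3},
\]
or with a $\lceil n/2\rceil$-style split. Since $ex_L(F, \cdot)$ and $ex(F,\cdot)$ are superadditive-ish and at least linear, unrolling the recurrence over the $O(\log n)$ levels gives $g(n) = O((ex_L(F,2n) + n)\log n + ex(F,n))$, which is exactly the claimed bound. (The $ex(F,n)$ term without a $\log$ factor appears because one can stop recursing once $G[V_i]$ is $F$-free, contributing only at the top level in aggregate; the edges surviving to that point total $O(ex(F,n))$.)

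**The main obstacle.** The crux — and the step I expect to be delicate — is the claim that a geometric graph all of whose edges cross a fixed line $\ell$, if it has many edges, must contain either $F_2$ directly or can be reduced to the "one line" model so that the $ex_L(F,2n)$ bound applies. Handling the edges crossing $\ell$ requires relating "every edge crosses $\ell$" to "every edge is hit by some line": one must rotate/transform so that the endpoints on one side of $\ell$ can be treated as collinear-ish, possibly splitting $E_\ell$ by which side contains which endpoint and applying a projective transformation sending $\ell$ to infinity, so that the portion of each edge on a fixed side looks like a graph hittable by a line. Making this precise — and ensuring that two disjoint copies of $F$ found among the $\ell$-crossing edges really do assemble into a genuine $F_2$ (complete disjointness, not merely vertex-disjointness) — is where the real work lies; everything else is a standard planar divide-and-conquer bookkeeping exercise.
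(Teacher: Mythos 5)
Your overall strategy (divide with lines, exploit the fact that two copies of $F$ separated by a line are completely disjoint) points in the right direction, but the proposal leaves the actual content of the theorem unproven. The set $E_\ell$ of edges hit by your dividing line is a geometric graph with no $F_2$, all of whose edges are hit by one line, so the bound you get for free is $|E_\ell|\le ex_L(F_2,n)$ --- in terms of $F_2$, not $F$. The whole point of the theorem is to replace $F_2$ by $F$, and the step you defer as ``where the real work lies'' is exactly that replacement. The paper proves separately (Theorem~\ref{thm:disjointL}) that $ex_L(F_2,n)\le O((n+ex_L(F,2n))\log n)$ by a second divide-and-conquer \emph{inside} the one-line model: a Ham--Sandwich cut $l'$ bisecting both halves of the vertex set is translated along $\ell$ until the edges of $E_\ell$ lying entirely on its two sides are balanced; two copies of $F$ on opposite sides of $l'$ would form $F_2$, so one side is $F$-free, which is what produces the $ex_L(F,\cdot)$ terms and lets the recursion run on the two diagonal quadrant pairs. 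Your suggestion that one can ``extract two completely disjoint copies of $F$'' directly from a large $G_\ell$ does not work as stated: two vertex-disjoint copies of $F$ among the $\ell$-crossing edges can still cross each other, which is precisely why an explicit separating line must be exhibited. Also, no projective transformation or reduction to a ``collinear-ish'' model is needed --- $ex_L$ is by definition the extremal function for geometric graphs whose edges are all hit by one line, and $E_\ell$ is already such a graph.

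A second, smaller issue: your outer recursion on $G[V_1]$ and $G[V_2]$ would attach a $\log n$ factor to the $ex(F,n)$ term, and your fix for this is only a gesture. The paper avoids the outer recursion entirely: it takes a \emph{rotating halving line}, stopping when the numbers of edges strictly to its left and right differ by at most $2n$; since at most one side can contain a copy of $F$ (else $F_2$), both sides then have at most $ex(F,n/2)+2n$ edges, so all but $O(ex(F,n)+n)$ edges are hit by this single line, and the one-line bound is applied exactly once. If you want to keep your recurrence, you must make the ``one side is $F$-free, so stop recursing there'' argument precise; as displayed, your recurrence does not yield the claimed bound.
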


\noindent Furthermore, we settle Conjecture~\ref{con} in the first nontrivial case.

\begin{theorem}
\label{thm:21fam}
An $n$-vertex geometric graph with no $(2,1)$-crossing family
has at most $O(n)$ edges.
\end{theorem}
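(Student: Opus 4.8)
The plan is to reduce, by way of one soft observation, to the special case of a geometric graph all of whose edges cross a common segment, and then to bound that case geometrically. The observation: if $G$ has no $(2,1)$-crossing family and $h$ is any edge, then the set $D(h)$ of edges disjoint from $h$ is crossing-free — if $f_1,f_2\in D(h)$ crossed, then $E_1=\{f_1,f_2\}$ and $E_2=\{h\}$ would be a $(2,1)$-crossing family — so $D(h)\cup\{h\}$ is a plane geometric graph and $|D(h)|\le 3n-6$. Hence every edge of $G$ crosses the relative interiors of all but at most $5n$ of the other edges, the exceptions being the at most $3n$ disjoint ones and the at most $2n$ sharing an endpoint.

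If $G$ has no crossing pair it is plane, with at most $3n-6$ edges. Otherwise fix a crossing pair $e_1=ab$, $e_2=cd$. Every edge of $G$ must touch $e_1\cup e_2$, since an edge disjoint from both, together with $\{e_1,e_2\}$, would be a $(2,1)$-crossing family. Discarding the at most $4(n-1)$ edges incident to $\{a,b,c,d\}$, every remaining edge crosses $\mathrm{int}(e_1)$ or $\mathrm{int}(e_2)$. It thus suffices to prove: a geometric graph $H$ on at most $n$ vertices with no $(2,1)$-crossing family, all of whose edges cross a fixed segment $s$ in their relative interiors, has $O(n)$ edges.

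For this key case, place $s$ on the $x$-axis, so $H$ is bipartite with every edge running from an ``upper'' vertex to a ``lower'' vertex and meeting the $x$-axis at an interior point of $s$. If $H$ has no crossing pair it is plane and we are done; otherwise fix a crossing pair $f,g\in H$, put $q=f\cap g$, and note by the observation that all but $O(n)$ edges of $H$ cross both $f$ and $g$, as well as $s$. The three lines carrying $s$, $f$, $g$ bound a triangle with vertices $q$, $x_f=f\cap s$, and $x_g=g\cap s$; an edge crossing all three segments has its supporting line separating one vertex of this triangle from the other two, so it cuts off a corner, crossing two of the three sides within those sides. Charging each surviving edge to the corner it cuts off, one corner $v$ is cut off by $\Omega(|E(H)|)$ of them, and these behave like chords of the wedge with apex $v$: ordering them by the two distances from $v$ to their crossing points on the two sides of the wedge, two of them are disjoint when these pairs of distances are concordant (nested chords) and cross when discordant (interleaved). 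An Erd\H{o}s--Szekeres / Dilworth argument on this two-dimensional order extracts a large subfamily that is pairwise crossing — then only $O(n)$ edges, since no two of them are disjoint — or pairwise disjoint — then at most $3n-6$ edges, since they form a plane graph.

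The main obstacle is quantitative. This dichotomy by itself, like the crossing-lemma or Ramsey-type estimates one naturally tries first, only yields $|E(H)|=O(n^2)$, whereas $O(n)$ is needed. Closing the gap requires using the hypothesis of no $(2,1)$-crossing family essentially and iteratively: choosing $f,g$ extremally (say with $x_f$, $x_g$ as close as possible on $s$, or $q$ extremal in height, so that the ``middle'' corner is void), re-applying the corner/wedge reduction to the surviving chord family until no crossing pair survives, and controlling the thin strips between consecutive nested chords against the $n$-vertex budget, so that the estimate telescopes to $O(n)$ rather than compounding along the recursion. This is the step where the straight-line nature of the edges, through the wedge structure, and the full hypothesis are genuinely used.
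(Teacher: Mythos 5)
Your reduction to the case where all edges cross a common segment is sound, and your opening observation --- that the set $D(h)$ of edges disjoint from a fixed edge $h$ is crossing-free, hence of size at most $3n-6$ --- is a correct and useful consequence of the hypothesis (it is essentially the contrapositive of the paper's observation that an edge crossing one edge of a disjoint matching must intersect all of them). But the proof has a genuine gap at exactly the point you flag yourself: the wedge/Erd\H{o}s--Szekeres dichotomy extracts from $m$ surviving chords only a subfamily of size about $\sqrt{m}$ that is pairwise crossing or pairwise disjoint, and bounding \emph{that subfamily} by $O(n)$ gives only $m=O(n^2)$. The final paragraph, which is supposed to close this gap, is a plan rather than an argument: ``re-applying the corner/wedge reduction until no crossing pair survives'' recurses on a chord family living on the \emph{same} $n$ vertices, so there is no decreasing parameter to make the estimate telescope, and no mechanism is given for why the $O(n)$ losses per round do not accumulate over $\Omega(n)$ rounds. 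As written, the argument proves $O(n^2)$, which is trivial, and the linear bound is not established.

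For comparison, the paper avoids this quantitative trap entirely by routing through a different extremal result: the T\'oth--Valtr theorem that a geometric graph with no $5$ pairwise \emph{disjoint} edges has at most $64n+64$ edges. In a minimal counterexample every vertex has degree at least $65$ and there is a disjoint matching $M$ with $|M|\ge 5$; the key observation is that any edge crossing one edge of $M$ must intersect all of them. The paper then shows that all but two endpoints of $M$ are ``good'' (every ray from them misses some edge of $M$), so good vertices send no edges across $M$; a planarity count forces some good vertex to have three neighbors each of which emits an edge meeting every edge of $M$, and a short geometric case analysis around that vertex produces a $(2,1)$-crossing family. If you want to salvage your approach, the missing ingredient is some globally linear structure of this kind --- a single forbidden configuration whose existence is forced by an edge count, rather than an iterated extraction.
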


\noindent Note that Conjecture~\ref{con} is not true for topological graphs since Pach and T\'oth \cite{pachtoth}
 showed that the complete graph can be drawn such that every pair of edges intersect once or twice.

Recall that $F$ is a \emph{circle graph} if $F$ can be represented as the intersection graph of chords on a circle.  By combining Theorem \ref{thm:21fam} with results from \cite{ag}, \cite{afps}, and \cite{toth}, we have the following.

\begin{corollary}
For any circle graph $F$ on $3$ vertices, every $n$-vertex geometric graph that
 does not contain a matching whose intersection graph is $F$ contains at most $O(n)$ edges.
\end{corollary}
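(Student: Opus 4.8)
The plan is to enumerate the circle graphs on three vertices and, for each one, identify ``contains a matching whose intersection graph is $F$'' with a configuration whose extremal function is already known to be $O(n)$. Every graph on at most three vertices is a circle graph, so there are exactly four cases: $F=\overline{K_3}$ (no edges), $F=K_2\cup K_1$ (one edge), $F=P_3$ (two edges), and $F=K_3$ (three edges). Taking the maximum of the four resulting constants will give the claimed bound, so the whole argument is a short case analysis once the dictionary between $F$ and the relevant forbidden configuration is set up.

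The one preliminary observation I would record is that in a geometric graph two crossing edges cannot share an endpoint, since two straight segments emanating from a common point meet only at that point. Hence, given a prescribed intersection pattern $F$ on three edges, the requirement that the three edges form a matching is automatic for every pair that is required to cross, and needs to be imposed only on pairs that are non-adjacent in $F$; for such a pair ``non-adjacent and sharing no endpoint'' is exactly the statement that the two edges are disjoint. With this in hand the four cases read as follows. If $F=K_3$, a matching with intersection graph $K_3$ is simply a set of three pairwise crossing edges, so $G$ is quasi-planar and has $O(n)$ edges by \cite{ag}. If $F=\overline{K_3}$, such a matching is a set of three pairwise disjoint edges, so $G$ has $O(n)$ edges by T\'oth's bound \cite{toth} on geometric graphs with no $k$ pairwise disjoint edges, applied with $k=3$.

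For $F=P_3$, a matching $\{a,b,c\}$ with $a\sim b\sim c$ and $a\not\sim c$ consists of two disjoint edges $a,c$ each crossing $b$, which is precisely a natural $(2,1)$-grid; by Theorem~\ref{grid2} (from \cite{afps}), $G$ has $O(n)$ edges. Finally, for $F=K_2\cup K_1$, a matching $\{a,b,c\}$ with $a\sim b$ and $c$ non-adjacent to both $a$ and $b$ is exactly a $(2,1)$-crossing family ($E_1=\{a,b\}$ pairwise crossing, $E_2=\{c\}$ disjoint from both), so $G$ has $O(n)$ edges by Theorem~\ref{thm:21fam}.

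There is no real obstacle beyond correctly matching each three-vertex pattern to one of the four known bounds and keeping the endpoint-sharing bookkeeping straight; the only genuinely new ingredient among the four cases is Theorem~\ref{thm:21fam}, which settles the $K_2\cup K_1$ case that was not previously covered, while the other three cases are already handled in the literature.
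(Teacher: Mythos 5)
Your proposal matches the paper's argument exactly: the paper derives the corollary by observing that the four circle graphs on three vertices correspond precisely to three pairwise crossing edges \cite{ag}, three pairwise disjoint edges \cite{toth}, a natural $(2,1)$-grid \cite{afps}, and a $(2,1)$-crossing family (Theorem~\ref{thm:21fam}), each with a known $O(n)$ bound. Your case dictionary and citations coincide with the paper's, so the proof is correct and essentially identical.
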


  \begin{figure}
  \centering
\subfigure[3 pairwise crossing.]{\label{3cross}\includegraphics[width=0.2\textwidth]{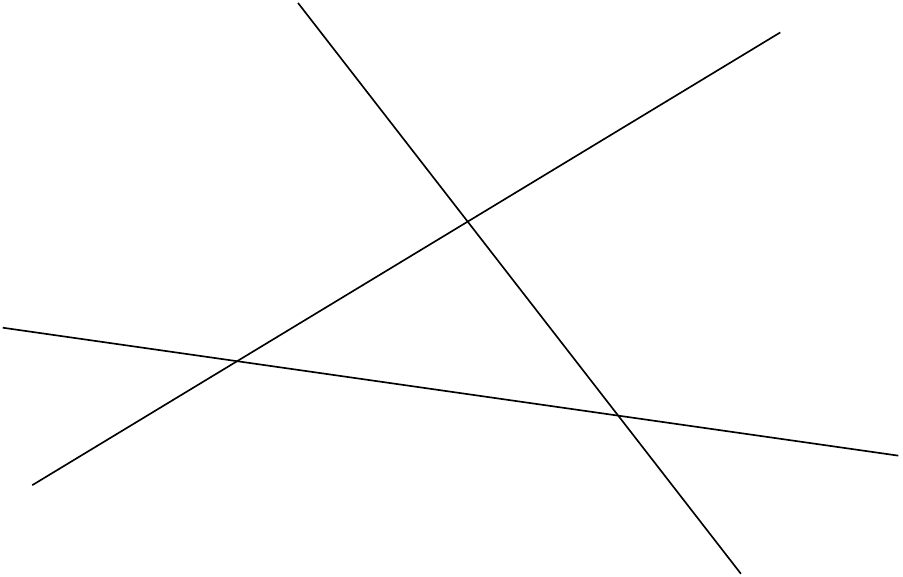}}\hspace{1cm}
\subfigure[3 pairwise disjoint.]{\label{3disjoint}\includegraphics[width=0.15\textwidth]{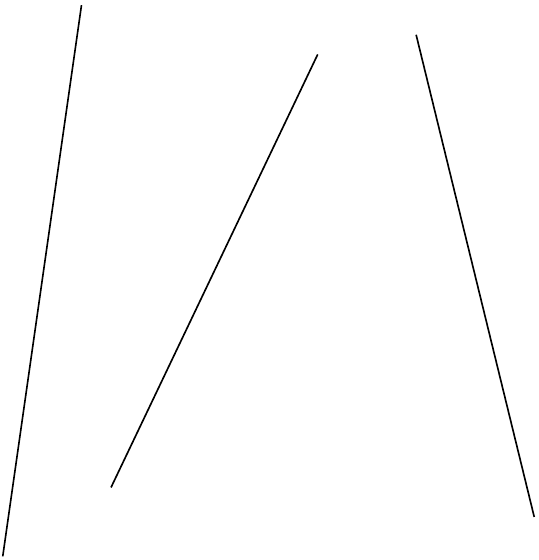}}\hspace{1cm}
\subfigure[(2,1)-grid.]{\label{21grid}\includegraphics[width=0.2\textwidth]{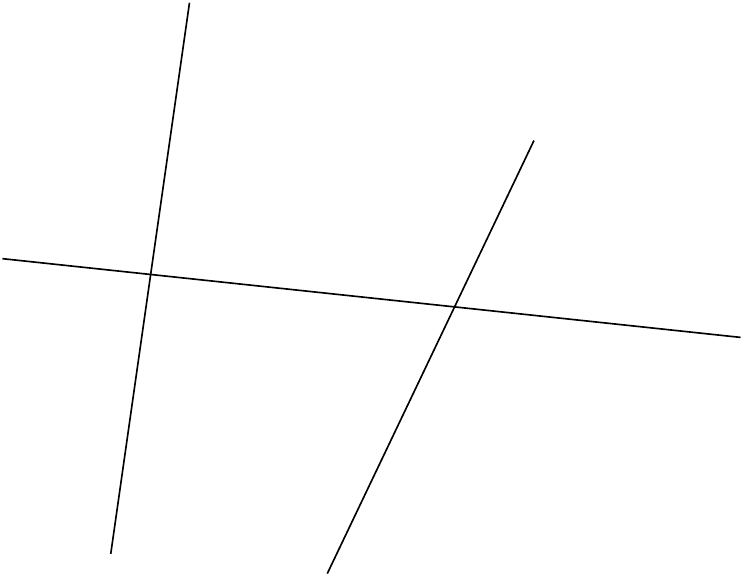}}\hspace{1cm}
\subfigure[(2,1)-crossing family.]{\label{21fam}\includegraphics[width=0.15\textwidth]{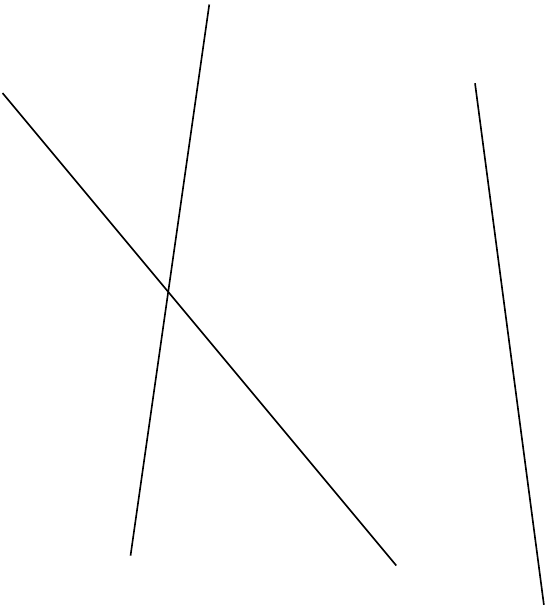}}
\caption{Triples of segments corresponding to all circle graphs on three vertices.}
  \label{fig:3match}
\end{figure}

\noindent  See Figure \ref{fig:3match}.
We also conjecture the following.

\begin{conjecture}
\label{circle}
For any circle graph $F$ on $k$ vertices, there exists a constant
 $c_k$ such that every $n$-vertex geometric graph that does not contain a
 matching whose intersection graph is $F$, contains at most $c_kn$ edges.
\end{conjecture}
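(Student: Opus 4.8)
\noindent\emph{Proposed approach to Conjecture~\ref{circle}.}
The plan is to reduce forbidding an arbitrary circle graph $F$ on $k$ vertices to a short list of ``atomic'' forbidden patterns by exploiting the recursive structure of circle graphs. Concretely, I would use the modular (Gallai) decomposition of $F$: its decomposition tree has series nodes (complete join of the children), parallel nodes (disjoint union of the children), and prime nodes, and since circle graphs are closed under taking induced subgraphs and quotients, every node again represents a circle graph. The first step is a \emph{substitution (blow-up) lemma}: replace each segment realizing a quotient graph $H$ by a thin bundle of nearly-coincident segments following it. For thin enough bundles, two bundles cross completely if the underlying segments cross and are disjoint otherwise, so a series node is reproduced by placing its children's realizations in mutually crossing regions and a parallel node by placing them in mutually disjoint regions. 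Thus, if a geometric graph $G$ with $\gg n$ edges is guaranteed to contain the ``reduced'' pattern obtained by contracting each module of $F$, one can harvest the $\Omega(n)$ edges lying near each contracted segment to recover $F$ itself, pushing the recursion down the decomposition tree.

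Given such a lemma, it suffices to force each irreducible building block. The parallel (independent-set) extreme $\overline{K_k}$ is already linear by T\'oth's theorem \cite{toth} on geometric graphs with no $k$ pairwise disjoint edges. The small bipartite blocks are exactly the natural $(k,l)$-grids and $(k,l)$-crossing families: Theorems~\ref{grid2} and~\ref{thm:21fam} give linear bounds in the first nontrivial cases, and for general $(k,l)$ the task is to upgrade the near-linear bounds of Theorems~\ref{grid1} and~\ref{geo} to $O(n)$, which I would attempt through a direct cuttings/levels argument that avoids the logarithmic loss inherent in the disjoint-copies machinery of Theorem~\ref{thm:disjoint}. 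The prime nodes—for instance the realizations of $C_5$ and of larger prime circle graphs—are the genuinely new cases; for these I would seek a charging scheme built on the observation that a prime circle graph forces a rigid interleaving of chord endpoints, which in a straight-line drawing imposes incompatible linear orders on the $2n$ endpoints that cannot be sustained by more than $O(n)$ edges.

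The main obstacle, however, is the series extreme, i.e.\ the clique block $F=K_k$. Since two crossing straight-line edges cannot share an endpoint, any two edges whose interiors meet are automatically vertex-disjoint; hence a matching whose intersection graph is $K_k$ is exactly a set of $k$ pairwise crossing edges, and the instance $F=K_k$ of the conjecture is precisely the well-known open problem stated in the introduction—whether an $n$-vertex geometric graph with no $k$ pairwise crossing edges has $O(n)$ edges. The best available bound is Valtr's $O(n\log n)$ \cite{valtr}, and removing this logarithm is the hard core of the whole conjecture, since any proof of Conjecture~\ref{circle} in full generality must in particular settle it. I therefore expect the realistic outcome of the program above to be an unconditional $O(n)$ bound for every circle graph $F$ whose decomposition tree contains no large series node (and more generally an $O(n\cdot g(n))$ bound inheriting whatever factor $g(n)$ is known for the pairwise-crossing problem), with the full statement reducing cleanly to the single stubborn case $F=K_k$.
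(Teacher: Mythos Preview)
The statement you are addressing is a \emph{conjecture} in the paper, not a theorem; the paper offers no proof and only remarks that the convex-position case follows from the Marcus--Tardos machinery. So there is no paper proof to compare your proposal against, and your write-up is a research program rather than a proof. On that level it is honest and largely accurate: you correctly observe that the instance $F=K_k$ is exactly the open problem of bounding geometric graphs with no $k$ pairwise crossing edges, for which Valtr's $O(n\log n)$ remains the state of the art, so no argument that stays within the paper's tools can settle Conjecture~\ref{circle} in full.

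That said, the reduction part of your program has a real gap. The ``substitution/blow-up lemma'' you sketch works at the level of the \emph{forbidden configuration} $F$---you can certainly realize $F$ by replacing chords with thin bundles---but that is not the direction you need. To force $F$ in the host graph $G$ you would need the converse: that many edges of $G$ concentrate near each segment of an already-found copy of the quotient $H$, so that you can recurse inside each bundle. Nothing in the hypothesis ``$G$ avoids $F$'' gives you such local density; the $\Omega(n)$ surplus edges of $G$ could sit far from any given copy of $H$. In particular, the step ``harvest the $\Omega(n)$ edges lying near each contracted segment'' is unjustified and is where most attempts at this kind of modular reduction break down. A genuine reduction would more likely go through a density-increment or regularity-type argument showing that avoiding $F$ forces avoidance of one of its modules on a positive fraction of the edges---and making that quantitative without logarithmic loss is essentially as hard as the $K_k$ case you already flagged.
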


\noindent As pointed out by Klazar and Marcus \cite{klazar},
 it is not hard to modify the proof of the Marcus-Tardos Theorem \cite{tardos} to show that
Conjecture \ref{circle} is true when the vertices are in convex position.

For simple topological graphs, we have the following

\begin{theorem}
\label{top}
An $n$-vertex simple topological graph with no $(k,1)$-crossing family
has at most $n(\log n)^{O(\log k)}$ edges.
\end{theorem}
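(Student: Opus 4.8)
The plan is to prove Theorem~\ref{top} by induction on $k$: each step of the induction roughly halves $k$ and loses only a polylogarithmic factor in $n$, so after $O(\log k)$ steps the accumulated loss is $(\log n)^{O(\log k)}$. The base case $k=1$ is immediate, since a simple topological graph with no $(1,1)$-crossing family has no two disjoint edges and hence at most $O(n)$ edges by \cite{lov,fulek}. Throughout I would use the elementary observation that if $G$ has no $(k,1)$-crossing family, then for every edge $e$ the set $B_e$ of edges disjoint from $e$ contains no $k$ pairwise crossing edges, since any such $k$ edges together with $e$ would constitute a $(k,1)$-crossing family. In particular $B_e$ spans a $k$-quasi-planar simple topological graph, so by the known near-linear bound on such graphs (Valtr \cite{valtr}; see also \cite{foxpach}) we have $|B_e|\le n(\log n)^{O(\log k)}$; and if $G$ itself is $k$-quasi-planar we are already done, so we may assume $G$ contains $k$ pairwise crossing edges.

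Next I would use such a clique to locate a heavily crossed edge. Fix $k$ pairwise crossing edges $c_1,\dots,c_k$ (which form a matching, as pairwise crossing edges share no endpoints). Since $G$ has no $(k,1)$-crossing family, no edge of $G$ is disjoint from all of $c_1,\dots,c_k$; at most $2kn$ edges are incident to one of the $c_i$; hence all but at most $2kn$ edges cross some $c_i$, and by averaging some $c_i=:e$ is crossed by at least $(m-2kn)/k$ edges, where $m=|E(G)|$. Let $X$ be the set of edges crossing $e$. Since every other edge of $G$ either crosses $e$, is incident to $e$, or lies in $B_e$, we get $m\le |X|+2n+|B_e|\le |X|+n(\log n)^{O(\log k)}$, so it suffices to bound $|X|$; moreover $m\le k|X|+2kn$, so $|X|=\Omega(m/k)$ once $m$ is large.

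The heart of the proof is a recursive bound on $|X|$. All edges of $X$ cross the single edge $e$, so they are linearly ordered along $e$ by their crossing point with $e$; this is a ``cylindrical'' situation, and I would run a divide-and-conquer driven by a recursive bisection of $e$. The aim is to partition $X$, after deleting $O(n)$ edges, into groups (indexed by the bisection) so that a pairwise crossing family that is large within two sibling groups can be amalgamated into a pairwise crossing family of roughly double the size; then a piece of $X$ containing a $(\lceil k/2\rceil,1)$-crossing family would yield, by such stitching, a $(k,1)$-crossing family in $X$. This reduces the problem of bounding $|X|$ for parameter $k$ (on at most $n$ vertices) to the same problem for parameter $\lceil k/2\rceil$ (on at most $2n$ vertices), at the cost of a factor $O(\log n)$ from the depth of the bisection plus an additive $O(n)$. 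Unrolling $O(\log k)$ times and invoking the base case gives $|X|\le n(\log n)^{O(\log k)}$, hence $m\le n(\log n)^{O(\log k)}$.

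The main obstacle is precisely this recursive step: producing a grouping of the edges crossing $e$ that genuinely halves the crossing parameter. A naive split by ``left half of $e$ versus right half of $e$'' fails, because two edges crossing $e$ far apart need not cross each other, so the amalgamation breaks down; the correct grouping must be dictated by the pairwise-crossing structure of $X$ itself, and this is exactly the mechanism responsible for the appearance of $\log k$ in the exponent of the known bounds for $k$-quasi-planar simple topological graphs. Adapting that mechanism here additionally requires coping with the extra ``single disjoint edge'' of a $(k,1)$-crossing family (which is why the sets $B_e$ and the reduction to ``all edges cross a common edge'' enter), together with routine but careful bookkeeping of the edges incident to the various fixed edges and systematic use of simplicity (each pair of edges crosses at most once) to control how edges in different groups can interact.
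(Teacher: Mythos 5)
There is a genuine gap, and you identify it yourself: the entire weight of the argument rests on the recursive step that partitions the set $X$ of edges crossing a fixed edge $e$ into groups so that pairwise-crossing families in sibling groups can be amalgamated into one of roughly double the size, and you never construct such a partition. As you note, the natural split along $e$ fails because two edges crossing $e$ at distant points need not cross each other, and ``adapting the mechanism responsible for the $\log k$ in the quasi-planarity bounds'' is not a proof step --- it is precisely the hard part. Without it, nothing bounds $|X|$, and since $|X|$ can be almost all of $E(G)$, the theorem does not follow. The preliminary reductions you do carry out are sound but only recover what is easy: the observation that the set $B_e$ of edges disjoint from a fixed edge $e$ contains no $k$ pairwise crossing edges (hence is bounded by the Fox--Pach quasi-planarity lemma) is exactly Case~1 of the paper's proof, and the averaging step locating a heavily crossed edge is routine.

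The paper avoids your obstacle entirely by inducting on $n$ rather than on $k$. After restricting to a bipartite subgraph, it splits into two cases according to the number of disjoint pairs of edges. If there are many, some edge is disjoint from many others and the Fox--Pach bound applied to $D(e)$ finishes. If there are few, the bipartite graph is redrawn (reflecting one vertex class and replacing the edges in a middle strip by segments) so that every pair of edges sharing no vertex and not originally disjoint crosses an even number of times; this bounds the odd-crossing number by the number of disjoint pairs plus $\sum_i d_i^2$, and the Pach--T\'oth inequality $b(G)\leq c_2\log n\sqrt{\ocn(G)+\sum_i d_i^2}$ then yields a bisection of width $O(|E(G)|/\log^2 n)$, small enough for the induction on $n$ to close. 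If you want to salvage your outline, you would essentially have to reprove a statement of Fox--Pach strength for the ``all edges cross a common edge'' configuration in simple topological graphs, which is not easier than invoking the separator machinery directly.
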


The paper is organized as follows. Section 2 is devoted to the proof of Theorem~\ref{thm:disjoint}.
In Section 3 we establish Theorem~\ref{thm:21fam}. Finally, the result of Theorem~\ref{top} about topological graphs
is proved in Section 4.

\section{Relating extremal functions}

First, we prove a variant of Theorem~\ref{thm:disjoint} when all of the edges in our geometric graph
can be hit by a line.
As in the introduction let $F_2$ denote a geometric graph, which consists of two completely disjoint copies of a geometric graph $F$.
We will now show that the extremal function $ex_L(F_2,n)$ is not far from $ex_L(F,n)$.

\begin{theorem}
\label{thm:disjointL}
$ex_L(F_2,n)\leq  O((n+ex_L(F,2n))\log n).$
\end{theorem}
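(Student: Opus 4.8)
We have a geometric graph $G$ on $n$ vertices, all of whose edges can be hit by a single line $\ell$, and we want to show that if $G$ has more than $C(n + ex_L(F,2n))\log n$ edges, then $G$ contains $F_2$, i.e. two completely disjoint copies of $F$. Here "completely disjoint" means the two copies share no vertices and no edge of one crosses any edge of the other. The plan is a divide-and-conquer along the line $\ell$: order the vertices of $G$ by the order in which their incident edges meet $\ell$ (more precisely, each edge $e$ crosses $\ell$ in a point $p_e$; we use the linear order of these crossing points to split the edge set). I would like to assume inductively that $ex_L(F_2, m)$ is small for $m < n$ and use a recursion of the form $ex_L(F_2,n) \le 2\, ex_L(F_2,n/2) + O(n + ex_L(F,2n))$, which unwinds to the claimed $O((n + ex_L(F,2n))\log n)$ bound.

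**The splitting step.** First I would find a point $q$ on $\ell$ such that roughly half the edges of $G$ cross $\ell$ to the "left" of $q$ and half to the "right." Let $E_{\mathrm{left}}$ and $E_{\mathrm{right}}$ be these two edge sets, and let $G_{\mathrm{left}}, G_{\mathrm{right}}$ be the corresponding subgraphs (on the vertices actually used). Each has at most $n$ vertices and about half the edges. Now there are two cases. Case 1: some edge $e$ of $E_{\mathrm{left}}$ and some edge $f$ of $E_{\mathrm{right}}$ are completely disjoint as segments (no shared endpoint, no crossing). Then, in the subgraph of $G_{\mathrm{left}}$ consisting of edges "near" $e$ and the subgraph of $G_{\mathrm{right}}$ near $f$, we look for a single copy of $F$ that lies entirely on the $e$-side and is disjoint from everything on the $f$-side — but this is still two copies, so I need to be more careful. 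The cleaner approach: if $G_{\mathrm{left}}$ already contains a copy of $F$ whose edges all cross $\ell$ within a small enough window, and $G_{\mathrm{right}}$ contains such a copy in a disjoint window, and the two windows on $\ell$ are separated, then because all edges of $G_{\mathrm{left}}$ meet $\ell$ on one side and all of $G_{\mathrm{right}}$ on the other, the two copies of $F$ are completely disjoint, giving $F_2$. So the recursion is: either one of the halves contains $F_2$ (recurse), or both halves fail to contain $F_2$, in which case each half has at most $ex_L(F_2, n)$... which is circular.

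**Fixing the recursion.** The right way is to bound things in terms of $ex_L(F, \cdot)$ on one side. Suppose $G$ has $m = |E(G)|$ edges. Split at the median crossing point into $E_1$ (edges crossing $\ell$ in the left half) and $E_2$ (right half), each of size $\ge (m-1)/2$. If the graph $G_1$ formed by $E_1$ contains a copy of $F$ \emph{all of whose edges cross $\ell$ strictly left of $q$}, then since every edge of $E_2$ crosses $\ell$ strictly to the right of $q$, that copy of $F$ is completely disjoint from the entire graph $G_2$; so it suffices for $G_2$ to contain one more copy of $F$ with all edges crossing $\ell$ to the right — and then we are done. Thus: if $|E_1|> ex_L(F,n)$ \emph{and} $|E_2| > ex_L(F,n)$, we find the two disjoint copies and obtain $F_2$. [One must be slightly careful that "a copy of $F$" produced by the extremal bound has all its edges on the correct side of $q$; this is automatic because every edge of $G_1$ crosses $\ell$ left of $q$ by construction.] Hence if $G$ has no $F_2$, then $\min(|E_1|,|E_2|) \le ex_L(F,n)$, i.e. $|E(G)| \le 2\,ex_L(F,n) + 1$ — which would give a bound with no $\log n$ at all, too strong to be plausible. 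The catch is that splitting the edge set by crossing points does \emph{not} split the vertex set, so we cannot simply recurse; and a copy of $F$ needs its \emph{vertices}, which may lie on either side. So the correct recursion splits by a vertical line $\ell'$ transverse to $\ell$, separating the \emph{vertices} into two halves of size $n/2$; edges within a part stay, edges crossing $\ell'$ form a "middle" set $M$. Recurse on the two halves (each on $n/2$ vertices, contributing $2\,ex_L(F_2,n/2)$), and separately bound $|M|$: the edges of $M$ all cross both $\ell$ and $\ell'$, so they form a geometric graph all of whose edges are hit by the line $\ell$ in a restricted region, and I would argue $|M| = O(n + ex_L(F,2n))$ by the two-disjoint-windows argument above applied along $\ell$ to $M$ (splitting $M$ at its median crossing point with $\ell$; the vertices now genuinely split because edges of $M$ go left-to-right across $\ell'$). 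This yields $ex_L(F_2,n) \le 2\,ex_L(F_2,n/2) + O(n+ex_L(F,2n))$ and hence the theorem.

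**Main obstacle.** The delicate point, and where I expect to spend the most effort, is guaranteeing that the two single copies of $F$ extracted from the two sides are \emph{completely disjoint}, including the no-crossing condition and the no-shared-vertex condition, simultaneously. Separating edges by a line controls crossings between the two sides (an edge entirely left of a line cannot cross one entirely right of it), but "entirely left" must refer to the full segment, not just its crossing point with $\ell$ — so the middle set $M$ in the vertex-split must be handled so that its own sub-copies of $F$ have all \emph{endpoints} (not just $\ell$-crossings) strictly on one side of the separating line used in the sub-argument. Getting these containments to nest correctly — vertex-separation for the recursion, edge/crossing-point separation for the base estimate on $M$ — and checking that $ex_L(F, 2n)$ (rather than $ex_L(F,n)$) is the right quantity because $M$ may use up to $2n$ vertices when we re-split it, is the part that requires care. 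Everything else is the standard master-theorem unrolling.
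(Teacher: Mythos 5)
Your high-level plan (divide and conquer with a recurrence of the form $ex_L(F_2,n)\le 2\,ex_L(F_2,n/2)+O(n+ex_L(F,2n))$) matches the shape of the paper's argument, but the step that carries all the weight --- the direct bound $|M|=O(n+ex_L(F,2n))$ on the set $M$ of edges crossing the vertex-separating line $\ell'$ --- is not established, and the argument you offer for it fails. You propose to split $M$ at the median point $q$ of its edges' crossing points with $\ell$ and to extract one copy of $F$ from each class. But two segments whose intersections with $\ell$ lie on opposite sides of $q$ can still cross each other away from $\ell$, and can share an endpoint; so the two copies of $F$ obtained this way need not be completely disjoint, and you cannot conclude that one of the two classes has at most $ex_L(F,\cdot)$ edges. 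You in fact diagnose exactly this defect earlier in your own writeup (``too strong to be plausible''), but the fix you adopt --- separating the \emph{vertices} by $\ell'$ --- only prevents the two recursive halves from interacting; it does nothing for $M$ itself, whose sub-argument still partitions edges by crossing points rather than by a genuine half-plane separation. Note also that $M$ is itself an $F_2$-free geometric graph all of whose edges are hit by one line, so a direct $O(n+ex_L(F,2n))$ bound on $|M|$ is essentially a log-free strengthening of the theorem you are trying to prove; it should not be expected to follow from a one-line median split.

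The paper's decomposition is arranged precisely so that every edge set it bounds by $ex_L(F,\cdot)$ consists of \emph{full segments lying entirely in one open half-plane}, which is what makes two copies of $F$ on opposite sides genuinely disjoint. Concretely: a halving edge is rotated (staying a halving line throughout) until the numbers of edges lying entirely to its left and entirely to its right differ by at most $2n$, which bounds both counts by $ex_L(F,n/2)+2n$; then a Ham--Sandwich line $l'$ bisecting both halves is translated until the edges lying entirely on each side of $l'$ are similarly controlled. The edges crossing \emph{both} lines --- the analogue of your $M$ --- are never bounded directly: they join the two diagonal quadrant pairs, which have $n(1/2+k)$ and $n(1/2-k)$ vertices, and these form the two recursive subproblems. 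To repair your proof you would need either a correct direct bound on $M$ (which seems as hard as the theorem) or to restructure the recursion as the paper does, so that the non-recursive terms always correspond to line-separated edge sets.
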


\begin{proof}

Let $G$ denote a geometric graph on $n$ vertices that does not contain
 $F_2$ as a geometric subgraph, and all the edges of $G$ can be hit by a line.
By a standard perturbation argument we can assume that the vertices of $G$ are in general position.  As in \cite{Dey}, a \emph{halving edge} $uv$ is a pair of the vertices in $G$
such that the number of vertices on each side of the line through $u$ and $v$ is the same.

\begin{lemma}
\label{lemma:halving}
There exists a directed line $\vec{l}$ such that the number of edges in $G$ that lies completely to the left or right of $\vec{l}$ is at most $2ex_L(F,n/2)+5n$.
\end{lemma}

\begin{proof}
If $n$ is odd we can discard one vertex of $G$, thereby loosing at most $n$ edges.
Therefore we can assume $n$ is even, and it suffices to show that there exists a directed line $\vec{l}$ such that the number of edges in $G$ that lies completely to the left or right of $\vec{l}$ is at most $2ex_L(F,n/2)+4n$.

 Let $uv$ be a halving edge, and let $\vec{l}$ denote the directed line containing vertices $u$ and $v$ with direction from $u$ to $v$.  Let $e(\vec{l},L)$ and $e(\vec{l},R)$ denote the number of edges on the left and right side of $\vec{l}$ respectively.  Without loss of generality, we can assume that $e(\vec{l},L) \leq e(\vec{l},R)$.   We will rotate $\vec{l}$ such that it remains a halving line at the end of each step, until it reaches a position where the number of edges on both sides of $\vec{l}$ is roughly the same.

 We start by rotating $\vec{l}$ counterclockwise around $u$ until it meets the next vertex $w$ of $G$.  If initially $w$ lies to the right of $\vec{l}$, then in the next step we will rotate $\vec{l}$ around $u$ (again).  See Figure  \ref{fig:halvingline2}.  Otherwise if $w$ was on the left side of $\vec{l}$, then in the next step we will rotate $\vec{l}$ around vertex $w$.  See Figure \ref{fig:halvingline}.  Clearly after each step in the rotation, there are exactly $n/2$ vertices on each side of $\vec{l}$.

\begin{figure}[h]
\centering
\subfigure[$w$ lies to the right of $\vec{l}$.]{\includegraphics[scale=0.55]{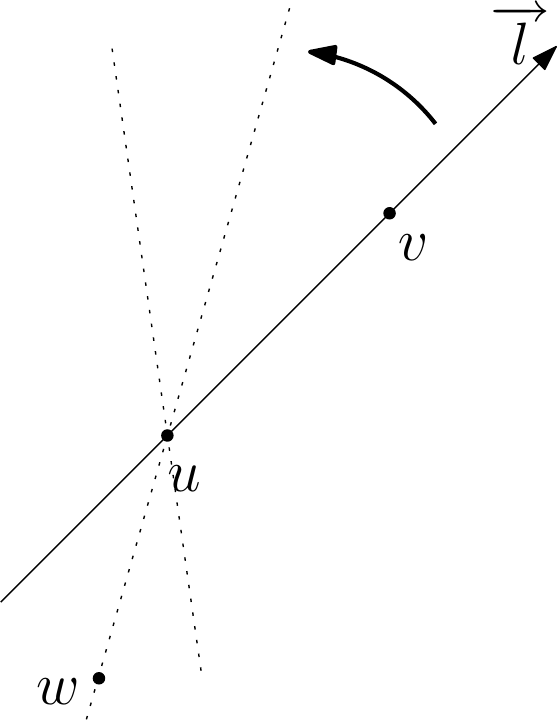}
	\label{fig:halvingline2}
	}\hspace{2cm}
\subfigure[$w$ lies to the left of $\vec{l}$.]{\includegraphics[scale=0.55]{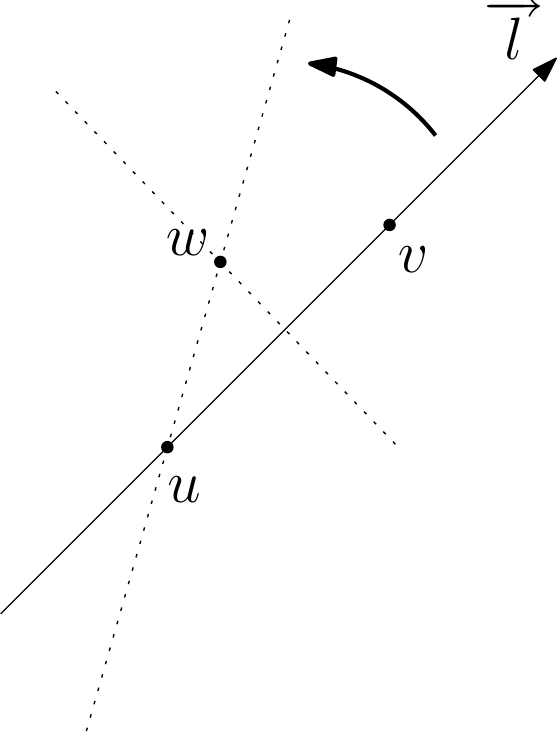}
	\label{fig:halvingline}
	}
\caption{Halving the vertices of $G$}
\end{figure}

After several rotations, $\vec{l}$ will eventually contain points $u$ and $v$ again, with direction from $v$ to $u$.  At this point we have $e(\vec{l},L) \geq e(\vec{l},R)$.  Since the number of edges on the right side (and left side) changes by at most $n$ after each step in the rotation, at some point in the rotation we must have

$$|e(\vec{l},L) - e(\vec{l},R)| \leq 2n.$$

Since $G$ does not contain a $F_2$ as a geometric subgraph, this implies that

$$e(\vec{l},L) \leq ex_L(F,n/2) + 2n$$

\noindent and

$$e(\vec{l},R) \leq ex_L(F,n/2) + 2n.$$

\noindent Therefore for any $n$, there exists a directed line $\vec{l}$ such that the number of edges in $G$ that lies completely to the left or right of $\vec{l}$ is at most $2ex_L(F,n/2)+5n$.

\end{proof}

\begin{figure}
\label{fig:hamsandwich}
\centering
\includegraphics[scale=0.6]{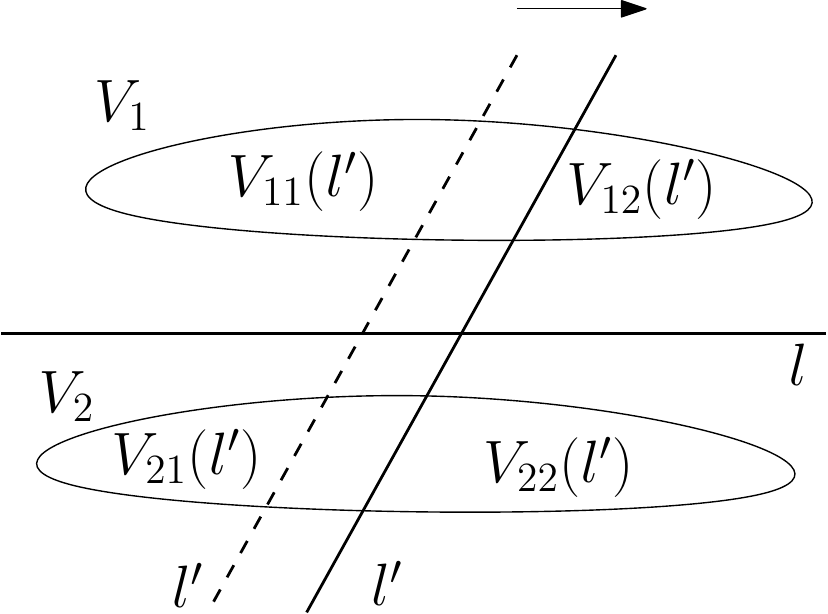}
\caption{The final partition of the vertex set of $G$}
\end{figure}

By Lemma \ref{lemma:halving} we obtain a line $l$, which partition the vertices of $G$ into two  equal
(or almost equal if $n$ is odd) sets $V_1$ and $V_2$.
Let $E'$ denote the set of edges between $V_1$ and $V_2$.
By the Ham-Sandwich Cut Theorem \cite{mat},
there exists a line $l'$ that simultaneously bisects $V_1$ and $V_2$.
Let $V_{11}(l')$ and $V_{12}(l')$ denote the resulting parts of $V_1$, and let $V_{21}(l')$ and $V_{22}(l')$
denote the resulting parts of $V_2$.

Observe that we can translate $l'$ along $l$ into a position where
 the number of edges in $E'$ that lie completely to the left and completely to the right of $l'$ is roughly the same.  In particular, we can translate $l'$ along $l$ such that the number of edges in $E'$ that lies completely to its left or right side is at most $ex_L(F,n)+ex_L(F,n/2+1)+n$ (see Figure
\ref{fig:hamsandwich}).  Indeed, assume that the number of edges in $E'$ between, say,
$V_{12}(l')$ and $V_{22}(l')$ is more than $ex_L(F,n/2+1)$.  As we translate $l'$ to the right, the number of edges that lie completely to the right of $l'$ changes by at most $n$ as $l'$ crosses a single vertex in $G$.  Therefore we can translate $l'$ into the leftmost position where the number of edges in $E'$ between
$V_{12}(l')$ and $V_{22}(l')$ drops below  $ex_L(F,n/2+1)+n+1$.  Moreover, at this position the number of
edges in $E'$ between $V_{11}(l')$ and $V_{21}(l')$ still cannot be more than
$ex_L(F,n)$ since $G$ does not contain $F_2$ as a geometric subgraph.

Thus, all but at most $3ex_L(F,n/2+1)+ex_L(F,n)+6n$ edges of $G$ are the edges between $V_{11}(l')$ and $V_{22}(l')$,
 and between $V_{12}(l')$ and $V_{21}(l')$.
Notice that there exists $k$, $-1/4\leq k\leq 1/4$,
 such that $|V_{11}(l')|+|V_{22}(l')|=n(1/2+k)$, and $|V_{12}(l')|+|V_{21}(l')|=n(1/2-k)$.
Finally, we are in the position to state the recurrence, whose closed form gives the statement of the theorem.

$$ex_L(F_2,n)\leq ex_L(F_2,n(1/2+k))+ex_L(F_2,n(1/2-k)) + 3ex_L(F,n/2+1)+ex_L(F,n)+6n.$$

\noindent By a routine calculation which is indicated below, we have
\begin{eqnarray}
\nonumber
ex_L(F_2,n)&\leq &  \log_{\frac{4}{3}}\left(n\left(\frac{1}{2}+k\right)\right)\left(6n\left(\frac{1}{2}+k\right)+
 4ex_L\left(F_2,2n\left(\frac{1}{2}+k\right)\right)\right) + \\ \nonumber
 & & \log_{\frac{4}{3}}\left(n\left(\frac{1}{2}-k\right)\right)\left(6n\left(\frac{1}{2}-k\right)+
 4ex_L\left(F_2,2n\left(\frac{1}{2}-k\right)\right)\right) + \\ \nonumber
 &&  4ex_L(F,n)+ 6n \\ \nonumber
 &\leq & \log_{\frac{4}{3}}n(4ex_L(F,2n)+ 6n)
\end{eqnarray}

\end{proof}

Finally, we show how Theorem~\ref{thm:disjointL} implies Theorem~\ref{thm:disjoint}.

\begin{proof}[Proof of Theorem \ref{thm:disjoint}.]
Let $G=(V,E)$ denote the geometric graph not containing $F_2$ as a subgraph.  Similarly, as in the proof of Lemma \ref{lemma:halving} we can find a halving line $l$ that hits
all but $2ex(F,n/2)+5n$ edges of $G$. Now, the claim follows by using Theorem \ref{thm:disjointL}.
\end{proof}

Theorem~\ref{geo} follows easily by using Theorem \ref{thm:disjoint}
 with a result from \cite{valtr}, which states that every $n$-vertex geometric graph whose
 edges can be all hit by a line and does not contain $k$ pairwise crossing edges has at most
$O(n)$ edges and at most $O(n\log n)$ edges if we do not require a single line to hit all the edges.

\section{Geometric graphs with no (2,1)-crossing family}

In this section we will prove Theorem \ref{thm:21fam}.  Our main tool is the following theorem by T\'oth and Valtr

\begin{theorem}
\label{thm:matching}
\emph{\cite{tothvaltr}} Let $G =(V,E)$ be an $n$-vertex geometric graph.
 If $G$ does not contain a matching consisting of $5$ pairwise disjoint edges, then $|E(G)| \leq 64n + 64$.
 \end{theorem}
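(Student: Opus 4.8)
The plan is to recover the T\'oth--Valtr bound from the general principle that a geometric graph with no $k$ pairwise disjoint edges has only $O(n)$ edges, specialized to $k=5$, via the Dilworth-type machinery of Pach and T\"or\H{o}csik. First I would put $G$ in general position by a small perturbation (no two endpoints sharing a coordinate, no two edges parallel, no vertical edges), which changes neither $|E(G)|$ nor which pairs of edges cross or are disjoint, and at the cost of an additive constant I would discard a vertex to make $n$ even. I would then orient each edge from its left to its right endpoint, so that each edge is recorded as an ordered quadruple of coordinates.

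The heart of the argument is to encode disjointness by comparability in a bounded number of partial orders on $E(G)$. I would split a disjoint pair $e,f$ into two cases. If their $x$-projections are disjoint, then one lies entirely to the left of the other; the relation ``$e$ lies strictly left of $f$'' is an interval order in which comparability implies disjointness and a chain is a set of pairwise disjoint edges, so one order handles this case. If their $x$-projections overlap, then, being straight and disjoint, one edge lies strictly above the other over the common $x$-range, and I would aim to capture all such pairs by \emph{two} suitably defined dominance orders on the endpoint coordinates (reflecting the $y$-axis), again arranged so that single-order comparability forces disjointness. I expect this to be the main obstacle: making ``above over the overlap'' into genuine (transitive, antisymmetric) partial orders, showing that two orders of this kind already cover every $x$-overlapping disjoint pair while never relating a crossing pair, and in particular handling disjoint segments whose $x$-projections are nested, where plain left-to-left and right-to-right dominance both fail.

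Granting a family $\le_1,\le_2,\le_3$ with the two properties (comparability in any single order implies disjointness, and every disjoint pair is comparable in some order), the combinatorics is routine. A chain in any $\le_i$ is a set of pairwise disjoint edges, so the absence of $5$ pairwise disjoint edges forces every chain to have length at most $4$; hence the height function of each order takes values in $\{1,2,3,4\}$, giving a map $E(G)\to\{1,2,3,4\}^3$. If two edges share a label they have equal height in each order, so they are incomparable in all three, and by the covering property they must cross. Thus every label class is a pairwise crossing, hence pairwise intersecting, family, which by the classical fact that a geometric graph with no two disjoint edges has at most $n$ edges contains at most $n$ edges. Summing over the $4^3=64$ classes yields $|E(G)|\le 64n$, and restoring the discarded vertex and the general-position bookkeeping absorbs the additive $64$.

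The delicate point, and the reason the precise constant is not automatic, is that the naive form of the Pach--T\"or\H{o}csik lemma uses \emph{four} orders and therefore only gives $4^4 n = 256n$. Shrinking the count to three orders is exactly the refinement that produces $4^3=64$, so essentially all of the work of reaching the stated figure lives in the geometric lemma of the second paragraph rather than in any counting. If three orders turn out to be unattainable, a fallback is to keep four orders but show that certain triples of heights cannot co-occur geometrically, trimming the effective number of classes to $64$; either way the improvement is a structural statement about disjoint segments, not a new counting idea.
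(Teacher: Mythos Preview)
The paper does not prove this theorem; it is quoted from T\'oth and Valtr~\cite{tothvaltr} and invoked as a black box in the proof of Theorem~\ref{21proof}. There is therefore no proof in the paper to compare your proposal against.

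On its own merits, your outline has a genuine gap exactly where you say it does. You correctly note that the Pach--T\"or\H{o}csik four-order scheme yields only $4^4 n=256n$ and that three orders with the same covering property would give $4^3 n=64n$, but you never construct those three orders. The split you propose --- one interval order for $x$-separated pairs plus two dominance orders for $x$-overlapping pairs --- does not work as stated: the relation ``$e$ lies above $f$ over their common $x$-range'' is not transitive once projections are allowed to nest or chain (if $a$ is above $b$ over $[0,3]$ and $b$ is above $c$ over $[7,10]$, nothing relates $a$ to $c$), and a single reflection about the $y$-axis does not repair this; that failure of transitivity is precisely why Pach and T\"or\H{o}csik require four orders in the first place. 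The T\'oth--Valtr improvement to $(k-1)^3(n+1)$ does not proceed by exhibiting three global partial orders on $E(G)$; it combines the partial-order decomposition with an additional geometric lemma that collapses one level within each class, and that lemma is the missing ingredient your fallback only gestures at. Two smaller issues: within a label class the edges are pairwise \emph{non-disjoint} (they may share endpoints), not pairwise crossing, though the bound of $n$ per class still applies; and discarding a vertex to force even $n$ is irrelevant to the Dilworth argument and cannot produce the additive $+64$, which in~\cite{tothvaltr} simply comes from the bound taking the form $(k-1)^3(n+1)$ with $k=5$.
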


\noindent Theorem \ref{thm:21fam} immediately follows from the following theorem.

\medskip

\begin{theorem}
\label{21proof}
Every $n$-vertex geometric graph with no $(2,1)$-crossing family has at most $64n  + 64$ edges.

\end{theorem}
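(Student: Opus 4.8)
The plan is to reduce the statement to Theorem~\ref{thm:matching} by showing that a geometric graph with no $(2,1)$-crossing family cannot contain a matching of $5$ pairwise disjoint edges. So suppose, for contradiction, that $G$ has no $(2,1)$-crossing family but does contain $5$ pairwise disjoint edges $e_1,\dots,e_5$. These $5$ segments, being pairwise disjoint, have a combinatorial structure that we can analyze: contract each segment to a point (or look at the $10$ endpoints in convex position order) and apply a Ramsey-type / convexity argument. The key observation is that among any sufficiently large family of pairwise disjoint segments one can find either two that ``see each other'' in a way that lets us route a crossing pair, or a sub-structure forcing a $(2,1)$-crossing family directly.

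First I would recall the standard fact that a set of pairwise disjoint segments can be linearly ordered by the ``above/below'' or ``left/right'' relation along a generic direction; more usefully, for pairwise disjoint segments one can 2-color or find a transitive sub-relation. The cleaner route: take the $5$ disjoint edges and consider, for each pair $e_i,e_j$, the convex hull of the four endpoints; disjointness means the two segments are ``parallel-like'' (each is an edge or a diagonal of a convex or reflex quadrilateral in a controlled way). I would then argue that with $5$ pairwise disjoint edges there must exist two of them, say $e_1,e_2$, together with a third edge $e_3$ positioned so that a segment can be drawn crossing both $e_1$ and $e_2$ while staying disjoint from $e_3$ — but we don't get to draw new edges, so instead the argument must be that $G$ being dense (which it is, since we want a contradiction to the edge bound) forces such crossing edges to already be present. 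Actually the intended argument is purely combinatorial on the $5$ disjoint edges themselves plus a few more edges of $G$: a matching of $5$ disjoint edges, by a pigeonhole on their cyclic endpoint order, contains two edges $e_i, e_j$ such that some other edge $e_k$ of the matching is ``separated'' from them, and then among the remaining edges incident to the relevant vertices we locate a crossing pair disjoint from a third edge.

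The main obstacle, and the heart of the argument, is the geometric case analysis showing that $5$ pairwise disjoint segments always yield a $(2,1)$-crossing family when the host graph is otherwise unconstrained — equivalently, pinning down exactly how few pairwise disjoint edges force the structure. The number $5$ presumably matches the $5$ in T\'oth--Valtr's theorem so that no extra loss is incurred. I expect the proof to fix a generic left-to-right direction, sort the $5$ disjoint edges, and use the fact that among $5$ elements under two interleaving linear orders (left-endpoint order and right-endpoint order, say) one finds a monotone or anti-monotone triple; the anti-monotone (``nested'' or ``crossing in projection'') triple then exhibits either two crossing edges disjoint from a third, or directly a $(2,1)$-crossing family. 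If this local extraction fails for $4$ edges, then $5$ should suffice by one more application of pigeonhole.

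Once the implication ``no $(2,1)$-crossing family $\Rightarrow$ no $5$ pairwise disjoint edges'' is established, the edge bound $64n+64$ is immediate from Theorem~\ref{thm:matching}, and Theorem~\ref{thm:21fam} follows at once. I would therefore structure the write-up as: (1) state and prove the structural lemma about $5$ pairwise disjoint segments forcing a $(2,1)$-crossing family, doing the geometric casework carefully; (2) invoke Theorem~\ref{thm:matching} to conclude. The delicate part to get right is enumerating the genuinely distinct configurations of a small set of pairwise disjoint segments up to the symmetries of the plane, so that the casework is complete; everything after that is a one-line deduction.
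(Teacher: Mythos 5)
The central lemma you propose --- that five pairwise disjoint segments by themselves force a $(2,1)$-crossing family --- is false, and not just in some edge case: a geometric graph consisting of exactly five pairwise disjoint segments contains \emph{no crossing pair at all}, hence trivially no $(2,1)$-crossing family. A $(2,1)$-crossing family needs two crossing edges, and disjoint segments never supply them. You actually notice this mid-argument (``but we don't get to draw new edges, so instead the argument must be that $G$ being dense \ldots forces such crossing edges to already be present''), which is exactly the right worry, but you then revert to casework on the five disjoint segments alone, which cannot work. The implication ``no $(2,1)$-crossing family $\Rightarrow$ no $5$ pairwise disjoint edges'' is simply not available, so the one-line reduction to Theorem~\ref{thm:matching} collapses.

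The paper uses Theorem~\ref{thm:matching} in the opposite direction. It takes a vertex-minimal counterexample $G$ with more than $64n+64$ edges, so every vertex has degree at least $65$, and T\'oth--Valtr then \emph{guarantees} a matching $M$ of at least $5$ pairwise disjoint edges. The key observation is that any edge crossing one edge of $M$ must intersect every edge of $M$ (otherwise a $(2,1)$-crossing family appears immediately). From this the paper defines ``good'' endpoints of $M$ (those from which every ray misses some edge of $M$), shows at least $|M|-2$ endpoints are good, and partitions the remaining vertices according to whether their incident edges cross $M$. The edges avoiding $M$ span a planar graph, so a counting argument using the minimum degree $65$ forces some good vertex to have three neighbors whose incident edges cross all of $M$; a short local geometric argument around that vertex then produces a $(2,1)$-crossing family, contradicting the assumption. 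So the density of $G$ is doing essential work that your reduction discards: the five disjoint edges are only the scaffolding, and the crossing pair is found among the many \emph{other} edges that minimality forces to exist. If you want to salvage your write-up, the structural lemma must be about a dense graph containing a large disjoint matching, not about the matching in isolation.
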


\begin{proof}
For sake of contradiction, let $G =(V,E)$ be a vertex-minimal counter example, i.e. $G$ is a geometric graph on $n$ vertices which has more than $64n  + 64$
edges and
$G$ does not contain a (2,1)-crossing family.  Hence every vertex in $G$ has degree at least 65.  Let $M$ denote the maximum matching in $G$ consisting of pairwise disjoint edges and let $V_M$ denote the vertices in $M$.
Since $|E(G)| > 64n + 64$, Theorem~\ref{thm:matching} implies that $|M|\geq 5$.
We say that two edges \emph{intersect} if they cross or share an endpoint.
The following simple observation is crucial in the subsequent analysis.\\

\begin{tabular}{rl}
(*) & An edge $e\in E$ that crosses an edge of $M$ must intersect every edge in $M$.
\end{tabular} \\

\noindent Indeed, otherwise we would obtain a (2,1)-crossing family.
  We call an endpoint $v$ of an edge in $M$ \emph{good} if every ray starting at $v$ misses at least one edge in $M$.  See Figure \ref{good}.

\begin{lemma}
\label{lemma:lama}
For $|M| \geq 4$, at least $|M|-2$ of the endpoints in $M$ are good.
\end{lemma}

\begin{proof}
We proceed by induction on $|M|$.  Assume $|M| = 4$.  If every triple in $M$ has a good vertex, then clearly we have at least two good vertices.  Otherwise the only matching consisting of three pairwise disjoint
edges with no good vertices is the one in Figure \ref{mustbe}.
By a simple case analysis, adding a disjoint edge to this matching creates two good vertices (see Appendix \ref{app:4edges}).
For the inductive step when $|M|>4$, we choose an arbitrary 4-tuple of edges in the matching.
By the above discussion, the 4-tuple has at least one good endpoint.
By removing the edge incident to this good vertex, the statement follows by the induction hypothesis.

\end{proof}

\begin{figure}[h]
  \centering
  \subfigure[]{
    \includegraphics[width=.2\textwidth]{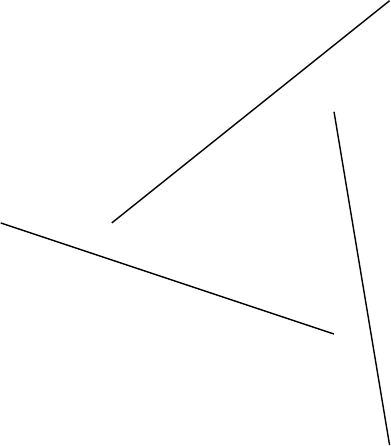}      
      \label{mustbe}}
      \hspace{100px}
   \subfigure[]{   
   \includegraphics[width=.2\textwidth]{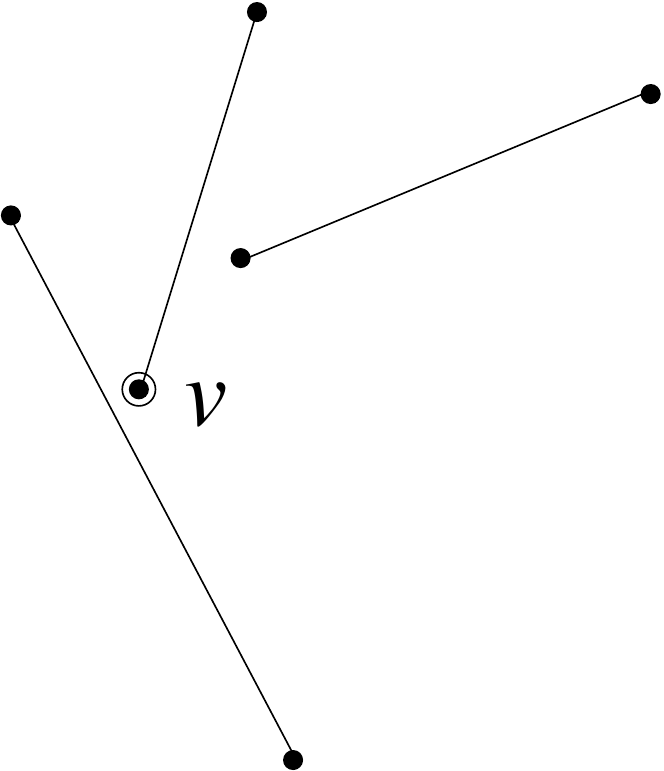}
      \label{good}}
   \caption{(a) Special case in Lemma \ref{lemma:lama}, (b) Matching $M$ of size 3 with one good vertex $v$}
\end{figure}

 Thus by (*), a good endpoint cannot be incident to an edge that crosses any of the edges in $M$.
 Let

 \begin{enumerate}

 \item $V_g\subseteq V_M$ denote the set of good endpoints in $M$.

 \item $V_1\subset V\setminus V_M$ be the subset of the vertices such that for $v \in V_1$, every edge incident to $v$ does not cross any of the edges in $M$,

 \item and $V_2 = V\setminus (V_M\cup V_1)$.  Hence for $v\in V_2$, there exists an edge incident to $v$ that intersects every edge in $M$.

\end{enumerate}

\begin{figure}[h]
  
\end{figure}

\noindent See Figure \ref{setup}.  By Lemma \ref{lemma:lama}, $|V_g| \geq |M|-2 = |V_M|/2 - 2$.   By maximality of $M$, there are no edges between $V_1$ and $V_2$ and $V_1$ is an independent set.  Now notice the following observation.

\begin{observation}

There exists a good vertex in $V_g$ that has at least three neighbors in $V_2$.

\end{observation}

\medskip

\noindent \emph{Proof.}  For sake of contradiction, suppose that each vertex in $V_g$ has at most two neighbors in $V_2$.  Then let $G'=(V', E')$ denote a subgraph of $G$ such that $V' = V_M\cup V_1$ and $E'$
 consists of the edges that do not cross any of the edges of $M$ and whose endpoints are in $V_1\cup V_M$. Since  $|M|\geq 5$, $G'$ must be a planar graph since otherwise we would have a $(2,1)$-crossing family.  Therefore $E'\leq 3(|V_1|+|V_M|)$.

On the other hand, by minimality of $G$, each vertex in $V_1$ has degree at least 65 in $G'$, and each vertex in $V_g$ has degree at least 63 in $G'$.  Therefore by applying Lemma \ref{lemma:lama}, we have

$$\frac{1}{2}\left( 65|V_1| + 63\left(\frac{|V_M|}{2} - 2\right)\right) \leq |E'| \leq 3|V_1| + 3|V_M|.$$

 \noindent This implies

$$59|V_1| + 25|V_M| \leq 126$$

\noindent which is a contradiction since $|V_M| \geq 10$ ($|M| \geq 5$).

$\hfill\square$

Let $v \in V_g$ be a good vertex such that $v$ has at least 3 neighbors in $V_2$.  Let $e=vv_1$, $f=vv_2$, $g=vv_3$, and $m$ be edges in $G$ such that $v_1,v_2,v_3\in V_2$, $m \in M$, and $v$ is a good vertex incident to $m$.  Furthermore,
 we will assume that $g,e,m,f$ appear in clockwise order around $v$. By (*) there is an edge $e'$ incident to $v_1$ having
non-empty intersection with every edge in $M$. Similarly, we can find such an edge $f'$ incident to $v_2$ (possibly $f'=e'$).
The edges $e'$ and $f'$ must have non-empty intersection with $f$ and $e$, respectively (see Figure \ref{fig:21final}).  Otherwise
we would obtain a (2,1)-crossing family in $G$ consisting of $e,f'$ and an edge from $M$, or $e',f$ and an edge from $M$.

However, a third edge $g$ cannot have a non-empty intersection with
both $e'$ and $f'$. Hence, we obtain a (2,1)-crossing family in $G$ consisting of $g,e'$
and an edge from $M$, or $g,f'$ and an edge from $M$. Thus, there is no minimal counter example and that concludes the proof.
\end{proof}

\begin{figure}[h]
\centering
\subfigure[]{
       \includegraphics[width=.3\textwidth]{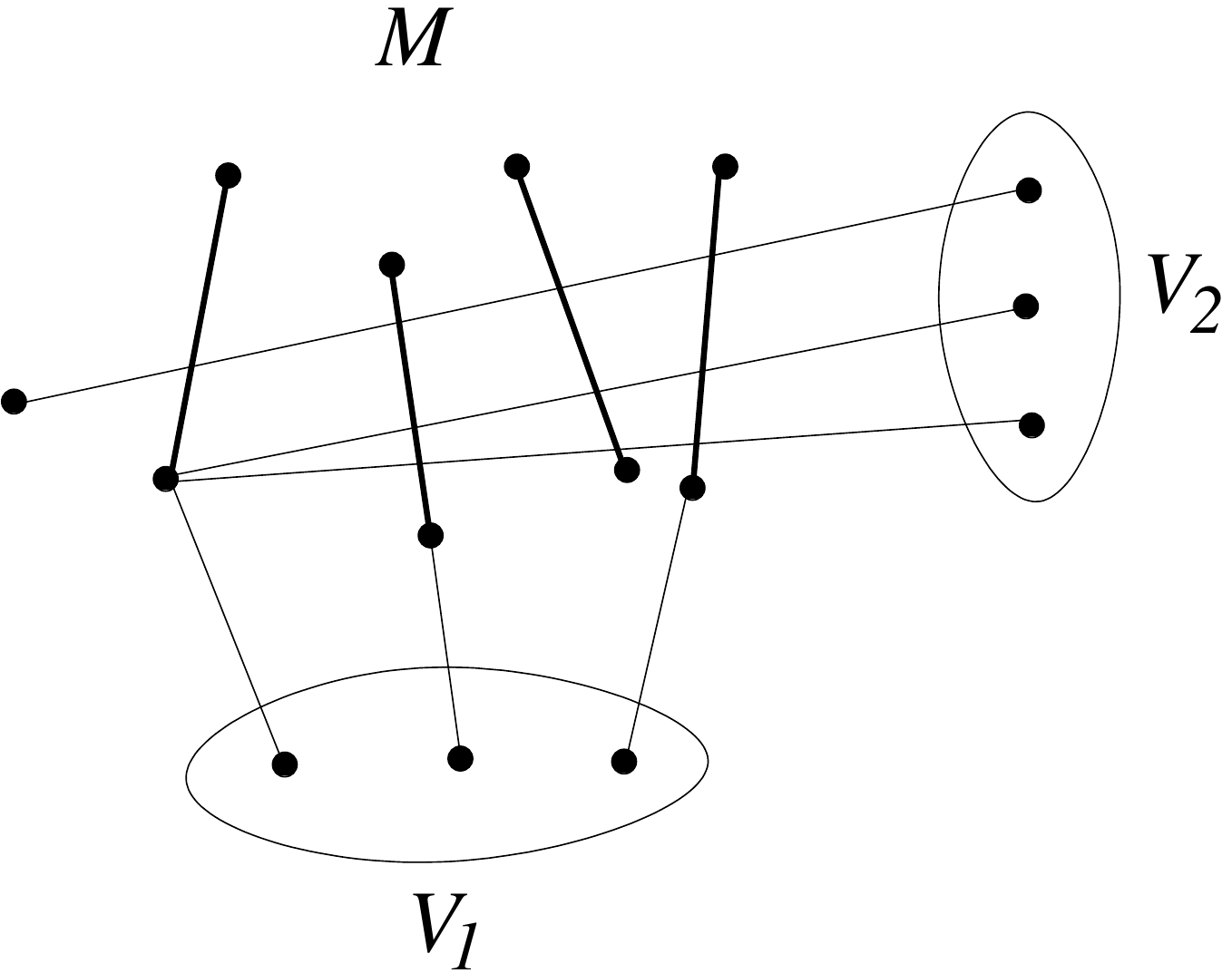}
		\label{setup}
}
\subfigure[]{
      \includegraphics[width=.3\textwidth]{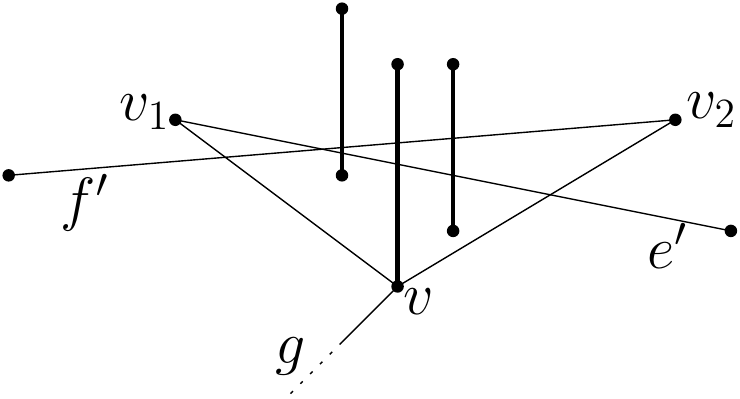}
 	  \label{fig:21final}
}
\caption{(a) $M$, $V_1$, and $V_2$, (b) Situation around the vertex $v$}
\end{figure}

\noindent We note that by a more tedious case analysis, one could improve the upper bound in Theorem \ref{21proof} to $15n$.

\section{Simple topological graphs with no $(k,1)$-crossing family}

In this section, we will prove Theorem \ref{top} which will require the following two lemmas.  The first one is due to Fox and Pach.

\begin{lemma}[ ]\label{foxpach}
\emph{\cite{foxpach}} Every $n$-vertex simple topological graph with no $k$ pairwise crossing edges has at most $n(\log n)^{c_1\log k}$ edges, where $c_1$ is an absolute constant.
\end{lemma}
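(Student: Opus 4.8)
The plan is to reduce this edge-counting statement to a chromatic-number bound on an associated \emph{crossing graph}, and then to prove that bound by a divide-and-conquer argument that simultaneously shrinks the vertex count and the forbidden clique size. Given a simple topological graph $G$ with $n$ vertices and $m$ edges, I form the crossing graph $H$ whose vertex set is the set of $m$ edges of $G$, with two vertices adjacent exactly when the corresponding edges cross. Since the edges of $G$ are non-self-intersecting arcs, $H$ is a \emph{string graph} (the intersection graph of a family of curves), and the hypothesis that $G$ has no $k$ pairwise crossing edges says precisely that $H$ contains no $K_k$, that is, $\omega(H)<k$.

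First I would observe that an independent set in $H$ is a family of pairwise non-crossing edges of $G$; together with the $n$ vertices these arcs form a plane simple graph, so any such family has at most $3n-6$ edges, giving $\alpha(H)\le 3n$. Consequently $m=|V(H)|\le \chi(H)\cdot\alpha(H)\le 3n\,\chi(H)$, since the colour classes of an optimal proper colouring are independent sets of size at most $\alpha(H)$. Thus the whole statement follows once I show that every $K_k$-free string graph on $N$ vertices satisfies $\chi\le(\log N)^{c\log k}$: substituting $N=m\le n^2$ (so $\log N\le 2\log n$) yields $m\le 3n(\log m)^{c\log k}\le n(\log n)^{c_1\log k}$ after absorbing constant factors into $c_1$.

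To bound $\chi(H)$ I would run a recursion that halves the clique parameter $k$. The key geometric input is that a sufficiently dense string graph contains two disjoint vertex subsets $A,B$, each a constant fraction of the whole, with every vertex of $A$ adjacent to every vertex of $B$ (a large complete bipartite crossing pattern among the curves). Given such $A,B$, a $\lceil k/2\rceil$-clique inside $A$ together with a $\lceil k/2\rceil$-clique inside $B$ would form a $K_k$ in $H$; hence at least one of $H[A],H[B]$ is $K_{\lceil k/2\rceil}$-free, and recursing on that side drives $k$ down to $2$ in $O(\log k)$ rounds. When no such dense pattern is present, the graph instead admits a sublinear \emph{separator}, and splitting along it reduces the vertex count by a constant factor, contributing the $\log N$ factors; interleaving the two reductions produces the exponent $c_1\log k$ on $\log N$. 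The base case $k=2$ (no crossings at all, hence planar) gives $\chi=O(1)$.

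The main obstacle is the geometric core of this recursion: establishing, for string graphs arising from drawings, either a large complete bipartite crossing pattern or a sublinear separator, with the two alternatives calibrated so that the vertex-count reduction and the $k$-halving combine to give $(\log N)^{O(\log k)}$ rather than a larger exponent. This is exactly the separator theorem for string graphs of \cite{foxpach}, whose proof rests on the Crossing Lemma (a dense drawing has $\Omega(m^3/n^2)$ crossings, so $H$ is edge-dense) together with weighted planar-separator machinery; tracking the constants through the recursion so that the final exponent is an absolute constant times $\log k$ is the delicate part, and is where the bulk of the work in \cite{foxpach} lies.
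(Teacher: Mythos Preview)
The paper does not give its own proof of this lemma; it is quoted verbatim from Fox and Pach~\cite{foxpach} and used as a black box in Section~4. So there is nothing in the present paper to compare your argument against.

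That said, your sketch is a faithful outline of the Fox--Pach argument: the reduction from edge count to $\chi$ of the crossing graph via $\alpha(H)\le 3n$, and the recursion that halves $k$ using the dichotomy ``dense string graph $\Rightarrow$ large balanced biclique, else small separator'' is exactly how \cite{foxpach} proceeds. One small point worth tightening: the crossing graph $H$ is not literally the intersection graph of the arcs, since edges of $G$ sharing an endpoint intersect without crossing; you should note that shrinking each arc slightly near its endpoints realises $H$ as a genuine string graph before invoking the Fox--Pach machinery. Also, in the biclique branch you only conclude that \emph{one} of $H[A],H[B]$ is $K_{\lceil k/2\rceil}$-free, but you still have to colour all of $H$, not just that side; the actual recursion in \cite{foxpach} handles this by bounding the chromatic number (or degeneracy) of the whole graph in terms of the smaller-$k$ bound, rather than simply ``recursing on that side.'' Your final paragraph correctly flags that calibrating these two reductions to land at exponent $O(\log k)$ is where the real work lies.
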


\noindent

As defined in \cite{crossing}, the {\it odd-crossing number} $\ocn(G)$ of a graph $G$ is the minimum possible number of unordered pairs of edges that crosses an odd number of times over all drawings of $G$. The {\it bisection width} of a graph $G$, denoted by $b(G)$, is the smallest nonnegative integer such that there is a partition of
 the vertex set $V=V_1 \, \dot{\cup} \, V_2$ with $\frac{1}{3}\cdot |V|\leq V_i\leq \frac{2}{3}\cdot |V|$ for $i=1,2$, and  $|E(V_1,V_2)|= b(G)$. The second lemma required is due to Pach and T\'oth, which relates the odd-crossing number of a graph to its bisection width.

\begin{lemma}[]\label{bisect}
\emph{\cite{pachtoth}} There is an absolute constant $c_2$ such that if $G$ is a graph with
 $n$ vertices of degrees $d_1,\ldots,d_n$, then
 $$b(G)\leq c_2\log n \sqrt{\ocn(G)+\sum_{i=1}^n d_i^2} .$$
 \end{lemma}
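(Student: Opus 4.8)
The plan is to obtain the bound by producing a balanced vertex partition directly from a drawing of $G$ realizing $\ocn(G)$ odd crossings, treating the crossing drawing as (almost) a planar graph so that the planar separator theorem can be applied. First I would fix a drawing $D$ of $G$ in which exactly $\ocn(G)$ pairs of edges cross an odd number of times, and pass to the \emph{planarization} $G^{*}$: place a new degree-four vertex at each crossing of $D$, so that $G^{*}$ is a plane graph. Assigning each original vertex weight $1$ and each crossing vertex weight $0$, the weighted Lipton--Tarjan planar separator theorem yields a Jordan curve $\gamma$ that splits the weight of $G^{*}$ into two parts of ratio between $1/3$ and $2/3$ while meeting only $O(\sqrt{|V(G^{*})|})$ vertices and arcs of $G^{*}$. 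Pulled back to $D$, the curve $\gamma$ partitions $V(G)$ into a balanced pair $V_1,V_2$, and the number of edges of $G$ it separates is at most the number of arcs of $G^{*}$ it meets.

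Two points need care in turning this into the stated estimate. High-degree vertices force the term $\sum_i d_i^{2}$: when $\gamma$ passes close to a vertex $v$ it may be charged for all $d_v$ edges incident to $v$, and summing these contributions along $\gamma$ and applying the Cauchy--Schwarz inequality produces a bound of the form $O\bigl(\sqrt{\sum_i d_i^{2}+(\text{crossings counted by }\gamma)}\bigr)$ rather than a bound proportional to the number of separator vertices alone. The factor $\log n$ enters because a single separator only guarantees the coarse $1/3$--$2/3$ split with an additive loss; iterating the separator construction on the two sides and merging the cuts produced over the $O(\log n)$ levels of the recursion is what upgrades it to a genuine bisection of width $O\bigl(\log n\,\sqrt{\ocn(G)+\sum_i d_i^{2}}\bigr)$. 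I would set up this recursion so that, at each level, the pieces are vertex disjoint; their odd-crossing numbers and degree-square sums are then subsums of those of $G$ and in particular never exceed $\ocn(G)$ and $\sum_i d_i^{2}$.

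The main obstacle, and the only genuinely new ingredient beyond the classical crossing-number argument, is that the planarization above charges for \emph{every} crossing of $D$, whereas the statement permits charging only for the \emph{odd} crossings; in general the total number of crossings in a drawing can vastly exceed $\ocn(G)$. To bridge this I would exploit that even-crossing pairs are topologically inessential: by a Hanani--Tutte-type redrawing that moves only the interiors of the edges, fixes all vertices, and preserves the parity of the crossing number of every pair, one can arrange that the separating curve $\gamma$ and the counting argument need only account for pairs crossing an odd number of times, so that ``crossings counted by $\gamma$'' is $O(\ocn(G))$. Verifying that this parity-preserving modification is compatible with the weighted separator theorem and with the degree-charging step is the crux of the proof; once it is established, combining it with the separator recursion of the previous paragraphs yields the claimed inequality.
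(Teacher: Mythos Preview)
The paper does not prove this lemma at all: it is quoted verbatim from Pach and T\'oth and used as a black box in the proof of Theorem~\ref{proof}. There is therefore no ``paper's own proof'' to compare your sketch against; the present paper simply cites the result and applies it.

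As a side remark on your sketch itself: the overall architecture (planarize a good drawing, apply a weighted separator, recurse $O(\log n)$ times to upgrade a $1/3$--$2/3$ split to a bisection, and pick up the $\sum_i d_i^2$ term from high-degree vertices via Cauchy--Schwarz) is indeed the approach Pach, Shahrokhi and Szegedy used for the ordinary crossing number, and Pach and T\'oth adapted it to $\ocn$. The delicate point you flag---that the planarization of a drawing realizing $\ocn(G)$ may have far more than $\ocn(G)$ crossing vertices---is real, and your proposed fix (a Hanani--Tutte-style parity-preserving redrawing so that only odd pairs are charged) is vaguer than the actual argument. In the original proof one does not literally redraw to eliminate even crossings; rather, one uses the fact that the pair-crossing number (and hence the ordinary crossing-number-based bisection bound of Pach--Shahrokhi--Szegedy) can be controlled in terms of $\ocn$, or one analyzes the separator directly in a drawing where the number of \emph{pairs} crossing is $\ocn(G)$ and argues that even-crossing pairs can be rerouted across the separator at no extra cost. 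If you want to turn your sketch into a proof, that step needs to be made precise; as written it is a plausible plan but not yet an argument.
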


\medskip

 \noindent Since all graphs have a bipartite subgraph with at least half of its edges, Theorem~\ref{geo} immediately follows from the following Theorem.

 \begin{theorem}
 \label{proof}
 Every $n$ vertex simple topological bipartite graph with no $(k,1)$-crossing family has at most $c_3n\log^{c_4\log k}n$ edges, where $c_3,c_4$ are absolute constants.

 \end{theorem}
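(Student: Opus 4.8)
The plan is to mimic the divide-and-conquer strategy that Pach and T\'oth (and Valtr) use for bounding the number of edges in graphs with few pairwise crossing edges, but to exploit the stronger hypothesis — no $(k,1)$-crossing family — to control the recursion. Let $f(n)$ denote the maximum number of edges in an $n$-vertex simple topological bipartite graph with no $(k,1)$-crossing family. First I would observe that if $G$ has no $(k,1)$-crossing family then in particular it has no $k$ pairwise crossing edges, so by Lemma~\ref{foxpach} the odd-crossing number satisfies a bound of the shape $\ocn(G) \le m\cdot (\log n)^{c_1 \log k}$ where $m=|E(G)|$ — more precisely I would argue that every edge participates in relatively few odd crossings, or failing that, bound $\ocn(G)$ by the total number of crossing pairs, which Lemma~\ref{foxpach} controls once we know there are no $k$ pairwise crossing edges (apply it, or a Ramsey-type/dilworth argument on the crossing graph, edge by edge). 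The point is to feed this into Lemma~\ref{bisect}: if $G$ were close to regular of degree $d = m/n$, then $\sum d_i^2 \approx m^2/n$, so $b(G) \le c_2 \log n\sqrt{m\,(\log n)^{c_1\log k} + m^2/n}$.

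Next I would run the standard bisection recursion. Split $V$ into $V_1 \dot\cup V_2$ with $\tfrac13 n \le |V_i|\le \tfrac23 n$ realizing the bisection width, recurse on $G[V_1]$ and $G[V_2]$, and account separately for the $b(G)$ crossing edges between the parts. This gives $m \le f(|V_1|) + f(|V_2|) + b(G)$, hence a recursion $f(n) \le f(n_1)+f(n_2) + c_2\log n\sqrt{f(n)\,(\log n)^{c_1\log k} + f(n)^2/n}$ with $n_1+n_2=n$ and both $n_i \le \tfrac23 n$. The usual way to close this: guess $f(n) \le c_3 n (\log n)^{\beta}$ with $\beta = c_4\log k$ to be determined, substitute, and check the inequality survives. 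The $f(n)^2/n$ term under the square root contributes $\sqrt{f(n)^2/n}\cdot\log n = f(n)\log n/\sqrt n$, and summing the geometric-type series over $\log n$ levels of recursion costs one extra $\log n$ factor — this is exactly where the exponent $\beta$ grows from $c_1\log k$ to $(c_1+O(1))\log k$, i.e. we lose an additive constant times $\log k$ in the exponent but nothing worse, which is consistent with the claimed $c_3 n (\log n)^{c_4\log k}$. I would also need the reduction-to-regular step: pass to a subgraph of minimum degree $\ge m/(2n)$ by repeatedly deleting low-degree vertices (standard), which changes $m$ and $n$ by at most constant factors and does not create a $(k,1)$-crossing family.

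The technical heart — and the step I expect to be the main obstacle — is getting a usable bound on $\ocn(G)$ from the no-$(k,1)$-crossing-family hypothesis, because $\ocn$ counts pairs of edges crossing an odd number of times and our hypothesis is about actual pairwise crossings of sets of edges, not parities. In a simple topological graph every pair of edges crosses at most once, so $\ocn(G)$ equals the number of crossing pairs, call it $\mathrm{cr}_0(G)$; then I need that a simple topological graph with no $k$ pairwise crossing edges has $\mathrm{cr}_0(G) \le m\,(\log m)^{c_1\log k}$. This does not follow formally from Lemma~\ref{foxpach} as stated (that bounds edges, not crossings), so I would either (i) iterate: remove a maximum "heavily crossed" edge, or partition the edges by a coloring argument so that within each color class there are few pairwise crossing edges and hence few edges, and sum; or (ii) better, delete all edges crossed more than $t := (\log m)^{c_1 \log k}$ times — a simple topological graph where every edge is crossed more than $t$ times has at least $mt/2$ crossings, and a counting/Turán-type argument on the crossing graph combined with Lemma~\ref{foxpach} forces $k$ pairwise crossing edges once $t$ is this large — so after deleting at most $m/100$ edges we reach a subgraph with $\mathrm{cr}_0 \le mt$, which is the bound we need; the deleted edges are folded into the recursion as a lower-order term. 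Once that bound on $\ocn$ is in hand, the rest is the bookkeeping sketched above, and the final bound follows by unwinding the recurrence over $O(\log n)$ levels.
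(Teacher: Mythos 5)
There are two genuine gaps here, and they sink the argument as proposed. First, your opening reduction is false: a graph with no $(k,1)$-crossing family can certainly contain $k$ pairwise crossing edges (a drawing in which every two edges cross has no disjoint pair of edges at all, hence no $(k,1)$-crossing family, yet has many pairwise crossing edges). So Lemma~\ref{foxpach} cannot be applied to $G$ itself. The paper applies it only to the set $D(e)$ of edges disjoint from a fixed edge $e$: \emph{that} set has no $k$ pairwise crossing edges, since together with $e$ it would form a $(k,1)$-crossing family. Second, and more fundamentally, your ``technical heart'' cannot be repaired: it is not true that a simple topological graph with no $k$ pairwise crossing edges has $O(m\,\mathrm{polylog}(m))$ crossing pairs. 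A natural grid --- two families of $m/2$ edges, each family pairwise disjoint, with every edge of one family crossing every edge of the other --- has $m^2/4$ crossings while its crossing graph is bipartite (clique number $2$), so no deletion of few edges and no Tur\'an-type argument on the crossing graph will extract $k$ pairwise crossing edges from high crossing degree. Consequently the plan of setting $\ocn(G)$ equal to the number of crossing pairs of the given drawing and bounding that quantity is hopeless.

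The step you are missing is the Pach--T\'oth redrawing trick: reflect the half of the bipartite drawing above the line $y=1$, which flips the parity of the number of crossings between every pair of independent edges. After the reflection, pairs that crossed (once, by simplicity) cross an even number of times and no longer count toward $\ocn$, while the pairs that do contribute are the originally \emph{disjoint} pairs plus pairs sharing a vertex. So the quantity one must control is the number of disjoint pairs, not crossing pairs --- and this is exactly where the $(k,1)$-crossing-family hypothesis enters, via a dichotomy: if there are many disjoint pairs, some edge $e$ is disjoint from many edges, and Lemma~\ref{foxpach} applied to $D(e)$ bounds $|E(G)|$ outright; if there are few disjoint pairs, the redrawing gives $\ocn(G)\leq |E(G)|^2/((2c_2)^2\log^6 n)+2|E(G)|n$, Lemma~\ref{bisect} then gives small bisection width, and the recursion closes much as in your sketch. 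Your bookkeeping at the end is essentially the right shape, but without the reflection and the disjoint-pairs dichotomy the input to Lemma~\ref{bisect} is never under control.
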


 \noindent \emph{Proof.}  We proceed by induction on $n$.  The base case is trivial.  For the inductive step, the proof falls into two cases.

 \medskip

 \noindent \emph{Case 1.}  Suppose there are at least $|E(G)|^2/((2c_2)^2\log^6 n)$ disjoint pair of edges in $G$.  Then by defining $D(e)$ to be the set of edges disjoint from edge $e$, we have

  $$\frac{2|E(G)|}{(2c_2)^2\log^6 n}\leq \frac{\sum\limits_{e \in E(G)} |D(e)|}{|E(G)|}$$

\noindent  Hence there exists an edge that is disjoint to at least $2|E(G)|/((2c_2)^2\log^6n)$ other edges.  By Lemma~\ref{foxpach} we have

    $$\frac{2|E(G)|}{(2c_2)^2 \log^6 n} \leq n(\log n)^{c_1\log k},$$

    \noindent which implies $|E(G)| \leq c_3n\log^{c_4\log k}n$ for sufficiently large constants $c_3,c_4$.

 \medskip

\noindent \emph{Case 2.}  Suppose there are at most $|E(G)|^2/((2c_2)^2\log^6n)$ disjoint pair of edges in $G$.  Since $G$ is bipartite, let $V_a$ and $V_b$ be its vertex class.  By applying a suitable homeomorphism to the plane, we can redraw $G$ such that

 \begin{enumerate}

 \item the vertices in $V_a$ are above the line $y = 1$, the vertices in $V_b$ are below the line $y = 0$,

 \item edges in the strip $0 \leq y \leq 1$ are vertical segments,

   \item we have not created nor removed any crossings.
 \end{enumerate}

\begin{figure}
\centering
\includegraphics[width=.7\textwidth]{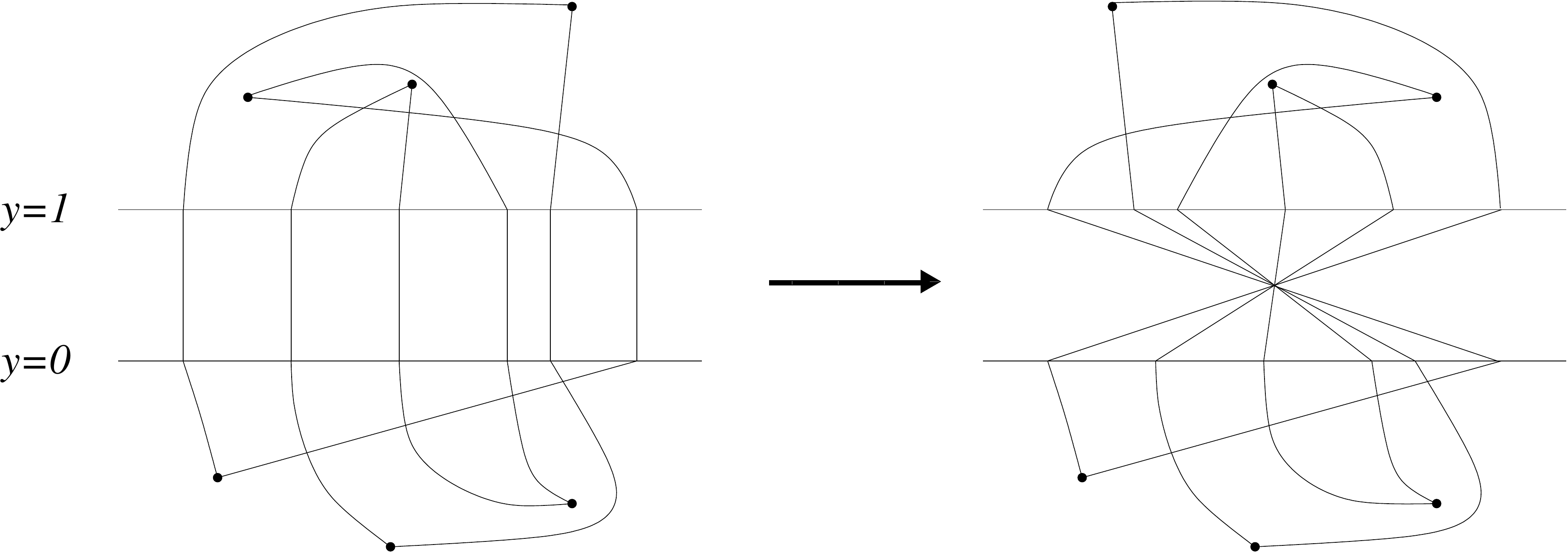}
\label{fig:redrawing}
\caption{Redrawing procedure}
\end{figure}

   \noindent  Now we reflect the part of $G$ that lies above the $y = 1$ line about the $y$-axis. 
 Then erase the edges in the strip $0\leq y \leq 1$ and replace them by straight line segments that 
reconnects the corresponding pairs on the line $y = 0$ and $y = 1$. See Figure~\ref{fig:redrawing}, and note that our graph is no longer simple.  Since there are at most $\sum\limits_{v\in V(G)} d^2(v) \leq 2|E(G)|n$ pair of edges that share a vertex in $G$, this implies

$$\ocn(G) \leq \frac{|E(G)|^2}{(2c_2)^2\log^6n} + 2|E(G)|n.$$

\noindent   By Lemma~\ref{bisect}, there is a partition of the vertex set $V=V_1 \, \dot{\cup} \, V_2$ with $\frac{1}{3}\cdot
 |V|\leq V_i\leq \frac{2}{3}\cdot |V|$ for $i=1,2$ and

 $$b(G) \leq c_2\log n \sqrt{\frac{|E(G)|^2}{(2c_2)^2\log^6 n} +  4n|E(G)|  }.$$

 \noindent  If

 $$\frac{|E(G)|^2}{(2c_2)^2\log^6 n} \leq  4n|E(G)|$$

 \noindent then we have $|E(G)| \leq c_3n\log^{c_4\log k}n$ and we are done.  Therefore we can assume

   $$ b(G) \leq c_2\log n \sqrt{\frac{2|E(G)|^2}{(2c_2)^2\log^6 n} } \leq \frac{ |E(G)|}{\log^2 n}.$$

\noindent Let $|V_1| = n_1$ and $|V_2| = n_2$.  By the induction hypothesis we have

 $$
 \begin{array}{ccl}
 |E(G)| & \leq & b(G)+  c_3n_1\log^{c_4\log k}n_1 + c_3n_2\log^{c_4\log k}n_2\\\\

    & \leq &  \frac{|E(G)|}{ \log^2  n}  + c_3n\log^{c_4\log k}(2n/3) \\\\
    & \leq &   \frac{|E(G)|}{ \log^2  n}+  c_3n(\log n  - \log(3/2))^{c_4\log k},  \\\\

 \end{array}$$

\noindent which implies

$$|E(G)| \leq c_3n\log^{c_4\log k}n \frac{(1 - \log(3/2)/\log n)^{c_4\log k}  }{1 - 1/\log^2n}  \leq c_3n\log^{c_4\log k}n.$$

 $\hfill\square$

\noindent For small values of $k$, one can obtain better bounds by replacing Lemma~\ref{foxpach}
 with a Theorem of Pach et. al. \cite{rados} and Ackerman \cite{ack} to obtain

\medskip

 \begin{theorem}
 \label{smallk}
For $k > 4$, every $n$ vertex simple topological graph with no $(k,1)$-crossing family has at most $O\left(n\log^{2k+2}n\right)$ edges.  For $k = 2,3,4$, every $n$ vertex simple topological graph with no $(k,1)$-crossing family has at most $O\left(n\log^{6}n\right)$ edges.

 \end{theorem}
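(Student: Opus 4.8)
The plan is to rerun the proof of Theorem~\ref{proof}, changing only the extremal estimate used in Case~1. Exactly as Theorem~\ref{top} is deduced from Theorem~\ref{proof}, it suffices --- since a bipartite subgraph retaining at least half the edges of $G$ still has no $(k,1)$-crossing family --- to prove the two claimed bounds for a bipartite simple topological graph $G$ with no $(k,1)$-crossing family, and we argue by induction on $n$ with a trivial base case, splitting on the number of disjoint pairs of edges in $G$.

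\emph{Case 1: $G$ has at least $|E(G)|^2/((2c_2)^2\log^6 n)$ disjoint pairs of edges.} Averaging as in Theorem~\ref{proof} yields an edge $e$ disjoint from a set $D(e)$ of at least $2|E(G)|/((2c_2)^2\log^6 n)$ edges. Were $D(e)$ to contain $k$ pairwise crossing edges, those edges together with $e$ would form a $(k,1)$-crossing family; hence the simple topological graph $(V,D(e))$, on at most $n$ vertices, has no $k$ pairwise crossing edges. For $k\le 4$ this forces $|D(e)|=O(n)$ by Ackerman's theorem~\cite{ack} ($k=2$ being mere planarity), while for general $k$ the bound of Pach et al.~\cite{rados} gives $|D(e)|=O(n\log^{2k-4}n)$. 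Rearranging, $|E(G)|=O(n\log^{6}n)$ for $k=2,3,4$ and $|E(G)|=O(n\log^{2k+2}n)$ for $k>4$, which matches the target; the absolute constant hidden here is absorbed into the constant $C$ chosen below.

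\emph{Case 2: $G$ has at most $|E(G)|^2/((2c_2)^2\log^6 n)$ disjoint pairs of edges.} Carry out the redrawing of Theorem~\ref{proof}: place one vertex class above $y=1$ and the other below $y=0$, make the edge-segments in the strip $0\le y\le 1$ vertical, then reflect the part above $y=1$ across the $y$-axis and reconnect the resulting endpoints on $y=0$ and $y=1$ by straight segments. The new drawing need not be simple, but any pair of its edges crossing an odd number of times either was a disjoint pair in $G$ or shares a vertex, so $\ocn(G)\le |E(G)|^2/((2c_2)^2\log^6 n)+2n|E(G)|$. By Lemma~\ref{bisect} there is a partition $V=V_1\,\dot{\cup}\,V_2$ with $\frac{1}{3}|V|\le|V_i|\le\frac{2}{3}|V|$ and $b(G)\le c_2\log n\sqrt{|E(G)|^2/((2c_2)^2\log^6 n)+4n|E(G)|}$. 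If the summand $4n|E(G)|$ dominates, then $|E(G)|=O(n\log^6 n)$ and we are done; otherwise $b(G)\le |E(G)|/\log^2 n$, and, applying the induction hypothesis to $G[V_1]$ and $G[V_2]$, we get $|E(G)|\le b(G)+C|V_1|\log^{s}|V_1|+C|V_2|\log^{s}|V_2|$, where $s=6$ for $k\le 4$ and $s=2k+2$ for $k>4$. Since $|V_i|\le 2n/3$, this collapses exactly as in Theorem~\ref{proof} to $|E(G)|\le C n\log^{s}n$ (with $C$ chosen once, large enough to also cover Case~1, just as $c_3$ is fixed in Theorem~\ref{proof}), closing the induction.

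The argument is thus entirely parallel to Theorem~\ref{proof}, and the only real content is the bookkeeping of exponents: the factor $\log^6 n$ in the threshold on the number of disjoint pairs is the smallest that still forces $b(G)\le |E(G)|/\log^2 n$ in Case~2, and it then pins the final exponent of $\log n$ to be $6$ plus the exponent in the quasi-planar estimate used in Case~1. So $O(n\log^{2k+2}n)$ is exactly what the $\log^{2k-4}n$ bound of \cite{rados} delivers, and $O(n\log^6 n)$ for $k\le 4$ is exactly what the linear bound of \cite{ack} delivers; substituting the Fox--Pach bound (Lemma~\ref{foxpach}) would instead just reprove Theorem~\ref{proof}. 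The one technical wrinkle is that $D(e)$ may meet up to $2n$ vertices rather than $n$, but monotonicity of the quasi-planar extremal function together with $\log(2n)=O(\log n)$ absorbs this harmlessly.
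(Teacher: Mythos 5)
Your proposal is correct and follows exactly the route the paper intends: the paper's own ``proof'' of this theorem is the one-line remark that one reruns the argument of Theorem~\ref{proof}, replacing the Fox--Pach bound in Case~1 by the quasi-planarity bounds of Pach et al.~\cite{rados} (for general $k$) and Ackerman~\cite{ack} (for $k\le 4$), and your write-up carries out precisely that substitution with the correct exponent bookkeeping. (A minor aside: since $D(e)\subseteq E(G)$, its edges span at most $n$ vertices, so the ``$2n$ vertices'' wrinkle you flag at the end does not actually arise.)
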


$\hfill\square$

\appendix{{\bf Appendix}}

\section{Four disjoint edges}
\label{app:4edges}

\begin{figure}[h]
\centering

 \subfigure[]{
 \label{triangle1}
		\includegraphics[scale=0.5]{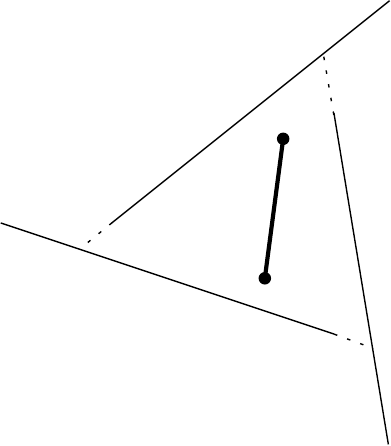}
	 \hspace{5mm}
	}
	\subfigure[]{
		\label{fig:triangle2}
		\includegraphics[scale=0.5]{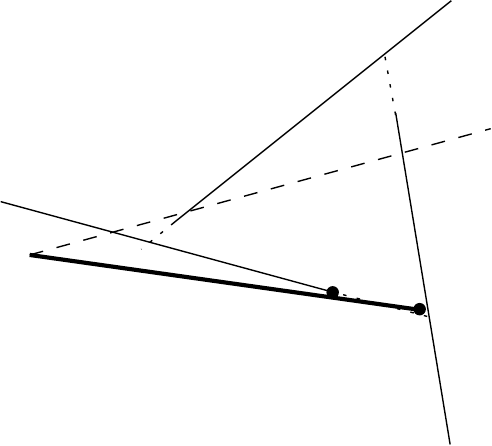}}
		\subfigure[]{
		\label{fig:triangle3}
		\includegraphics[scale=0.5]{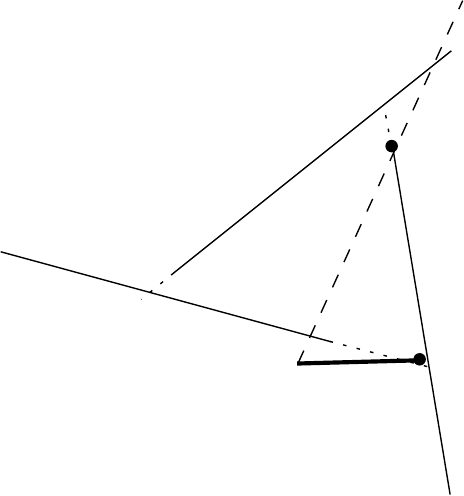}}
		\subfigure[]{
		\label{fig:triangle4}
		\includegraphics[scale=0.5]{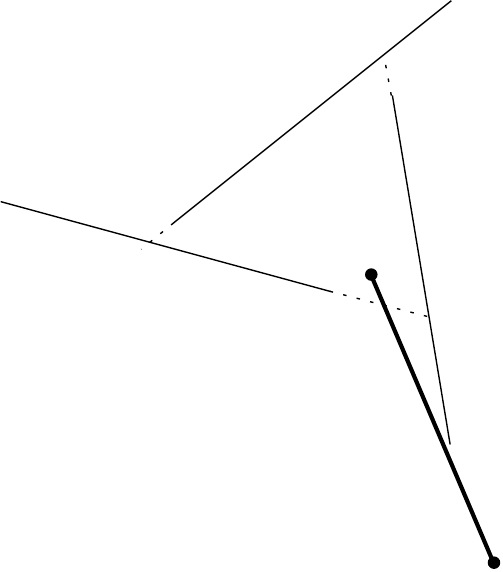}}
		\subfigure[]{
		\label{fig:triangle5}
		\includegraphics[scale=0.5]{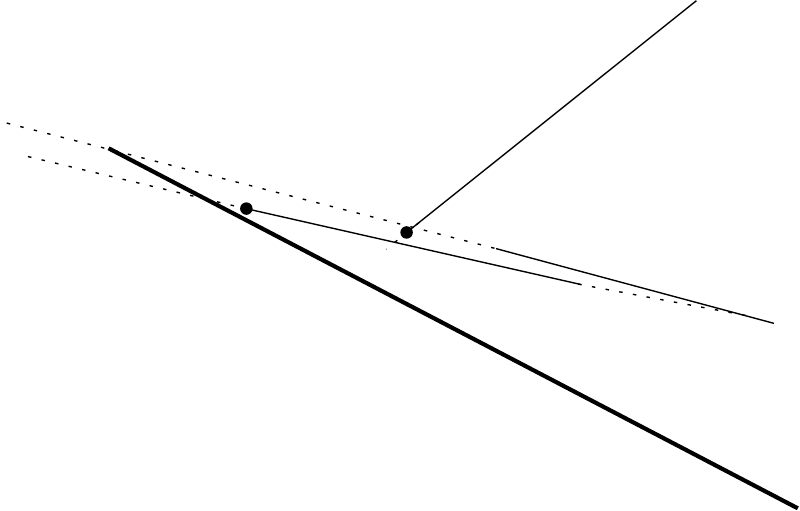}}
		\subfigure[]{
		\label{fig:triangle6}
		\includegraphics[scale=0.5]{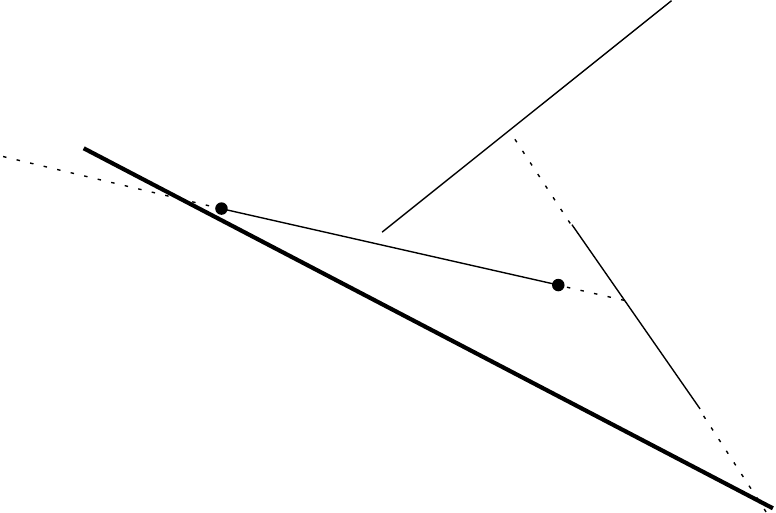}}
		\subfigure[]{
		\label{fig:triangle7}
		\includegraphics[scale=0.5]{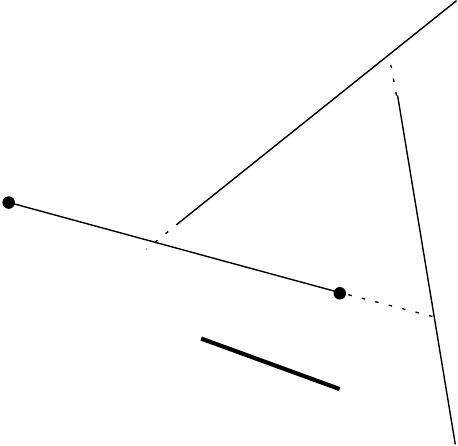}}
	\caption{Possible configuration of 4 pairwise disjoint edges, the additional edge is bold, and good vertices are marked by small discs}
\end{figure}

Suppose that we have three pairwise disjoint edges in the plane, whose combinatorial configuration
is that of the configuration in Figure \ref{mustbe}. Let $T$ denote the triangle we get
by prolonging the edges until they hit another edge.
In what follows
we show that by adding additional edge to this configuration, so that all the edges remain pairwise disjoint,
we obtain at least two good endpoints (as defined in the proof of Theorem \ref{thm:21fam}).

There are three cases to check:
\begin{enumerate}
\item
The additional edge is completely inside of the triangle $T$ (see Figure \ref{triangle1}).
The two good points are the endpoints of the additional edge.
\item
The additional  edge $e$ has one endpoint in the inside the triangle $T$ and the one outside of it.
Clearly, the endpoint of $e$ inside of $T$ is a good endpoint. If the other endpoint of $e$ is not good,
we can easily see (see Figure \ref{fig:triangle2}, \ref{fig:triangle3} and \ref{fig:triangle4}), that one of the remaining endpoints is good.
\item
The additional  edge is completely outside of the triangle $T$. There are three cases to be checked
(see Figure \ref{fig:triangle5}, \ref{fig:triangle6} and \ref{fig:triangle7}), according to where the lines through our segments meet.

\end{enumerate}

\end{document}